\documentclass[envcountsame,runningheads,11pt]{llncs}
\usepackage{a4wide}
\usepackage{xspace}
\usepackage{amsmath}
\usepackage{amssymb}
\usepackage{graphicx}
\usepackage{stmaryrd}
\newcommand{\myurl}[1]{\url{{\rm\texttt{#1}}}\xspace}
\newcommand{\IR}{\mathbb{R}}

\newcommand{\IQ}{\mathbb{Q}}

\newcommand{\IN}{\mathbb{N}}
\newcommand{\dom}{\operatorname{dom}}
\newcommand{\sgn}{\operatorname{sgn}}
\newcommand{\range}{\operatorname{range}}
\newcommand{\graph}{\operatorname{graph}}

\newcommand{\QH}{\operatorname{Q}_\text{H}}
\newcommand{\id}{\operatorname{id}}

\newcommand{\calO}{\mathcal{O}}
\newcommand{\calF}{\mathcal{F}}

\newcommand{\calM}{\mathcal{M}}
\newcommand{\calD}{\mathcal{D}}

\newcommand{\person}[1]{\textsc{#1}}

\newcommand{\mycite}[2]{{\rm\cite[\textsc{#1}]{#2}}}

\newcommand{\toto}{\rightrightarrows}

\newcommand{\ball}{B}
\newcommand{\closure}[1]{\overline{#1}}

\newcommand{\cball}{\closure{\ball}}
\newcommand{\myrho}{\rho}

\newcommand{\rhosd}{\myrho_{\text{sd}}}

\newcommand{\sdzero}{\textup{\texttt{0}}\xspace}
\newcommand{\sdone}{\textup{\texttt{1}}\xspace}
\newcommand{\sdminus}{\textup{\texttt{\={1}}}\xspace}
\newcommand{\sddot}{\textup{\texttt{.}}\xspace}
\newcommand{\myrhoC}{\myrho_{\text{C}}}

\newcommand{\Cantor}{\{0,1\}^\omega}
\newcommand{\COMMENTED}[1]{}

\spnewtheorem{observation}[theorem]{Observation}{\bfseries}{\itshape}
\spnewtheorem{fact}[theorem]{Fact}{\bfseries}{\itshape}
\spnewtheorem{myclaim}[theorem]{Claim}{\bfseries}{\itshape}
\spnewtheorem{scholium}[theorem]{Scholium}{\bfseries}{\itshape}
\spnewtheorem{myexample}[theorem]{Example}{\bfseries}{\itshape}
\spnewtheorem{myquestion}[theorem]{Question}{\bfseries}{\itshape}
\spnewtheorem{myremark}[theorem]{Remark}{\bfseries}{\itshape}
\spnewtheorem{myconjecture}[theorem]{Conjecture}{\bfseries}{\itshape}

\begin{document}
\setcounter{secnumdepth}{3}
\setcounter{tocdepth}{3}
\title{Relative Computability and Uniform Continuity of Relations}
\titlerunning{Relative Computability and Uniform Continuity of Relations}
\author{Arno Pauly\inst{1} \and Martin Ziegler\inst{2}}
\authorrunning{Arno Pauly \and Martin Ziegler}
\institute{Cambridge University \and Technische Universit\"{at} Darmstadt}
\date{}
%no \addcontentsline{toc}{title}{...}  or  \addcontentsline{toc}{author}{...}
\makeatletter
\renewcommand\maketitle{\newpage
  \refstepcounter{chapter}%
  \stepcounter{section}%
  \setcounter{section}{0}%
  \setcounter{subsection}{0}%
  \setcounter{figure}{0}
  \setcounter{table}{0}
  \setcounter{equation}{0}
  \setcounter{footnote}{0}%
  \begingroup
    \parindent=\z@
    \renewcommand\thefootnote{\@fnsymbol\c@footnote}%
    \if@twocolumn
      \ifnum \col@number=\@ne
        \@maketitle
      \else
        \twocolumn[\@maketitle]%
      \fi
    \else
      \newpage
      \global\@topnum\z@   % Prevents figures from going at top of page.
      \@maketitle
    \fi
    \thispagestyle{empty}\@thanks
    \def\\{\unskip\ \ignorespaces}\def\inst##1{\unskip{}}%
    \def\thanks##1{\unskip{}}\def\fnmsep{\unskip}%
    \instindent=\hsize
    \advance\instindent by-\headlineindent
%    \if!\the\toctitle!\addcontentsline{toc}{title}{\@title}\else
%       \addcontentsline{toc}{title}{\the\toctitle}\fi
    \if@runhead
       \if!\the\titlerunning!\else
         \edef\@title{\the\titlerunning}%
       \fi
       \global\setbox\titrun=\hbox{\small\rm\unboldmath\ignorespaces\@title}%
       \ifdim\wd\titrun>\instindent
          \typeout{Title too long for running head. Please supply}%
          \typeout{a shorter form with \string\titlerunning\space prior to
                   \string\maketitle}%
          \global\setbox\titrun=\hbox{\small\rm
          Title Suppressed Due to Excessive Length}%
       \fi
       \xdef\@title{\copy\titrun}%
    \fi
    \if!\the\tocauthor!\relax
      {\def\and{\noexpand\protect\noexpand\and}%
      \protected@xdef\toc@uthor{\@author}}%
    \else
      \def\\{\noexpand\protect\noexpand\newline}%
      \protected@xdef\scratch{\the\tocauthor}%
      \protected@xdef\toc@uthor{\scratch}%
    \fi
%    \addtocontents{toc}{\noexpand\protect\noexpand\authcount{\the\c@auco}}%
%    \addcontentsline{toc}{author}{\toc@uthor}%
    \if@runhead
       \if!\the\authorrunning!
         \value{@inst}=\value{@auth}%
         \setcounter{@auth}{1}%
       \else
         \edef\@author{\the\authorrunning}%
       \fi
       \global\setbox\authrun=\hbox{\small\unboldmath\@author\unskip}%
       \ifdim\wd\authrun>\instindent
          \typeout{Names of authors too long for running head. Please supply}%
          \typeout{a shorter form with \string\authorrunning\space prior to
                   \string\maketitle}%
          \global\setbox\authrun=\hbox{\small\rm
          Authors Suppressed Due to Excessive Length}%
       \fi
       \xdef\@author{\copy\authrun}%
       \markboth{\@author}{\@title}%
     \fi
  \endgroup
  \setcounter{footnote}{\fnnstart}%
  \clearheadinfo}
\makeatother

\maketitle
\def\thefootnote{\fnsymbol{footnote}}
\addtocounter{footnote}{3}
\begin{abstract}
A type-2 computable real function is necessarily continuous;
and this remains true for relative, i.e. oracle-based computations.
Conversely, by the Weierstrass Approximation Theorem, every continuous
$f:[0,1]\to\mathbb{R}$ is computable relative to some oracle. 

In their search for a similar topological characterization of 
relatively computable \emph{multi-}valued functions 
$f:[0,1]\rightrightarrows\mathbb{R}$ (aka relations), 
Brattka and Hertling (1994) have considered
two notions: weak continuity (which is weaker than relative computability)
and strong continuity (which is stronger than relative computability).
Observing that \emph{uniform} continuity plays a crucial role
in the Weierstrass Theorem, we propose and compare several notions
of uniform continuity for relations. Here, due to the additional 
quantification over values $y\in f(x)$, new ways arise of (linearly) 
ordering quantifiers---yet none turns out as satisfactory.

We are thus led to a notion of uniform continuity based on
the \textsf{Henkin quantifier}; and prove it necessary
for relative computability of compact real relations. 
In fact iterating this condition
yields a strict hierarchy of notions each necessary,
and the $\omega$-th level also sufficient,
for relative computability.
\end{abstract}
\begin{center}
\begin{minipage}[c]{0.9\textwidth}\vspace*{-8ex}%
\renewcommand{\contentsname}{}
\tableofcontents
\end{minipage}
\end{center}
\smallskip
%\pagebreak\addtocounter{page}{-1}%
%%%%%%%%%%%%%%%%%%%%%%%%%%%%%%%%%%%%%%%%%%%%%%%%%
\section{Introduction}
A simple counting argument shows that not every (total) integer 
function $f:\IN\to\IN$ can be computable; 
on the other hand, each such function
can be encoded into an oracle $\calO\subseteq\{0,1\}^*$
that renders it relatively computable.
Over real numbers, similarly, not every total $f:[0,1]\to\IR$
can be computable for cardinality reasons; 
and this remains true for oracle machines.
In fact it is folklore in Recursive Analysis that any function $f$
computably mapping approximations of real numbers $x$ to approximations of $f(x)$
must necessarily be continuous; and the same remains true for oracle
computations. Even more surprisingly, this implication can be reversed: 
If a (say, real) function $f$ is continuous, then there exists an oracle 
which renders $f$ computable\footnote{%
It has been observed that a continuous function $f:[0,1]\to[0,1]$ will usually
not have a \emph{least} oracle rendering it computable \cite{Miller}}. 
This can for instance be concluded from 
the Weierstrass Approximation Theorem. 
A far reaching generalization from the reals
to so-called \emph{admissibly represented spaces} is the 
\textsf{Kreitz-Weihrauch Theorem}, 
cf. e.g. \mycite{3.2.11}{Weihrauch}
and compare the \textsf{Myhill-Shepherdson Theorem} 
in Domain Theory.
The equivalence between continuity and relative computability has
led \person{Dana Scott} to consider continuity as an approximation
to computability.

Now many computational problems are more naturally expressed as 
relations (i.e. multivalued) rather than as (single-valued) functions.
For instance when diagonalizing a given real symmetric matrix, one
is interested in \emph{some} basis of eigenvectors, not a specific one.
It is thus natural to consider computations which, given $x$, 
intensionally choose and output \emph{some} value $y\in f(x)$.
Indeed, a multifunction may well be computable 
yet admit no continuous single-valued \emph{selection};
cf. e.g. \mycite{Exercise~5.1.13}{Weihrauch} or \cite{Luckhardt}.
Hence multivaluedness avoids some of the topological restrictions 
of single-valued functions---but of course not all of them.
Specifically it is easy to see that a multifunction $f$ is relatively computable
~iff~ it admits a continuous so-called \emph{realizer},
that is a function mapping any infinite binary string encoding some $x$
to an infinite binary string encoding some $y\in f(x)$.

However the single-valued case raises the hope for an intrinsic
characterization of relative computability of $f$, without
referral to Cantor space. Such an investigation has been pursued
in \cite{VascoPeter94}, yielding both necessary and sufficient
conditions for a relation to be computable relative to some oracle
(which, there, is called \emph{relative continuity} and we 
shall denote as \emph{relative computability}).
\person{Brattka} and \person{Hertling} 
have established what remains to-date the best counterpart to the
Kreitz-Weihrauch Theorem for the multivalued case:

\begin{fact} \label{f:VascoPeter94}
Let $X,Y$ be separable metric spaces and $Y$ in addition complete.
Then a pointwise closed relation $f:X\toto Y$ 
is relatively computable
~iff~ it has a strongly continuous tightening\footnote{We
reserve the original term ``restriction'' to denote either
$f|_{A}:=f\cap (A\times Y)$ or $f|^B:=f\cap (X\times B)$
for some $A\subseteq X$ or $B\subseteq Y$.}
\end{fact}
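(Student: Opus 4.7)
The plan is to prove the two implications separately, leveraging the equivalence noted just before the Fact: relative computability of $f:X\toto Y$ is equivalent to the existence of a continuous realizer $F:\subseteq\Cantor\to\Cantor$ with respect to admissible Cauchy representations $\delta_X,\delta_Y$ of $X$ and $Y$. The two directions use this bridge in opposite ways: one translates a topological property on $X,Y$ into a continuous realizer, the other extracts a tightening on $X,Y$ from a continuous realizer.

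For the sufficiency direction, suppose $f$ admits a strongly continuous tightening $g$. Every realizer of $g$ is automatically a realizer of $f$, so it suffices to produce a continuous realizer for $g$. My strategy is a Cauchy-approximation on the $Y$-side: given ever longer prefixes of some $p\in\delta_X^{-1}(x)$, strong continuity should let me pick a rational open ball $B\subseteq Y$ whose intersection with $g(x')$ remains non-empty for every $x'$ compatible with that prefix. Iterating produces a nested sequence of such balls with radii tending to zero; completeness of $Y$ together with pointwise closedness of $g$ then yield a single limit point $y\in g(x)$. The oracle is taken to enumerate the witnessing map from prefixes in $\dom(\delta_X)$ to rational balls in $Y$.

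For the necessity direction, assume $F:\subseteq\Cantor\to\Cantor$ is a continuous realizer of $f$. The natural candidate tightening is
\[
  g(x)\;:=\;\overline{\bigl\{\delta_Y(F(p))\;:\;p\in\delta_X^{-1}(x)\cap\dom F\bigr\}},
\]
closure taken in $Y$. Pointwise closedness of $f$ forces $g(x)\subseteq f(x)$, so $g$ is genuinely a tightening, and $\dom g=\dom f$ since every $x\in\dom f$ has a $\delta_X$-name on which $F$ is defined. To obtain strong continuity of $g$ one translates continuity of $F$, a statement about finite binary prefixes on Cantor space, into a neighborhood statement about rational balls in $X$ and $Y$; admissibility of $\delta_X,\delta_Y$ is precisely the bridge that makes this translation possible.

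The step I expect to be the main obstacle is matching the two halves of strong continuity. Continuity of the realizer delivers the ``lower'' direction rather directly (an open $V\subseteq Y$ meeting $g(x)$ pulls back through $F$ and $\delta_Y$ to a neighborhood of $x$ along which $g$ keeps meeting $V$), while strong continuity additionally demands a matching ``upper'' condition. Here the completeness of $Y$---and not merely its separability---should be essential, since it underwrites the limit argument by which approximation data on Cantor space are converted into genuine points of $g(x)$. Reconciling these two halves while keeping the values inside the closure-tightening $g$ rather than the larger $f$, and doing so uniformly in $x$, is where I expect the bulk of the technical work to lie.
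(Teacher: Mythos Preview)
The paper does not supply its own proof of this statement: it is quoted as a known result of Brattka and Hertling \cite{VascoPeter94} and used as background throughout. So there is no in-paper argument to compare your outline against; your two-direction plan is, in broad strokes, the route taken in the original reference.

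Two corrections to your outline are worth recording. First, strong continuity in the sense of Definition~\ref{d:Main}a) is a \emph{single} lower-hemicontinuity-type condition---for every $(x,y)\in g$ and every $\varepsilon$ there is $\delta$ with $g(x')\cap\ball(y,\varepsilon)\neq\emptyset$ for all $x'\in\ball(x,\delta)$---and has no ``matching upper half''. What you describe as the lower direction already \emph{is} strong continuity; once the realizer delivers that, the necessity direction is finished. Second, completeness of $Y$ enters in the \emph{sufficiency} direction (a strongly continuous tightening yields a realizer: the nested rational balls you produce must converge, and the limit must lie in $g(x)$ by pointwise closedness), not in the necessity direction, where you simply push the continuous $F$ forward through $\delta_Y$ and take pointwise closures inside the already-closed $f(x)$.

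One genuine technical point you have not flagged in the necessity direction: the step ``every $x'$ close to $x$ has a $\delta_X$-name sharing a prescribed prefix with the chosen name $p$ of $x$'' can fail for boundary names under the standard Cauchy representation (e.g.\ when $d(q_k,x)=2^{-k}$ exactly). This is handled either by working with a strict-inequality variant of the Cauchy representation, or by an admissibility/quotient argument; it is routine but should be acknowledged.
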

Here, being \emph{pointwise closed} means that
$f(x):=\{y\in Y:(x,y)\in f\}$ is a closed subset
for every $x\in X$. We shall freely switch between
the viewpoint of $f:\subseteq X\toto Y$ 
being a relation ($f\subseteq X\times Y$)
and being a set-valued partial mapping
$f:\subseteq X\to 2^Y$, $x\mapsto f(x)$.
Such $f$ is considered \emph{total}
(written $f:X\toto Y$)
if $\dom(f):=\{x\in X:f(x)\neq\emptyset\}$ 
coincides with $X$. 
Following \mycite{Definition~7}{Multifunctions},
$g$ is said to \emph{tighten} $f$ 
(and $f$ to \emph{loosen} $g$) 
if both $\dom(f)\subseteq\dom(g)$ and 
$\forall x\in\dom(f): g(x)\subseteq f(x)$ hold;
see Figure~\ref{f:tighten}a) and note
that tightening is obviously reflexive and transitive.
Furthermore write
$f[S]:=\bigcup_{x\in S} f(x)$ 
for $S\subseteq X$ and $\range(f):=f[X]$;
also $f|_S:=f\cap(S\times Y)$ and
$f|^T:=f\cap (X\times T)$ for $T\subseteq Y$.
Finally let $f^{-1}:=\{(y,x):(x,y)\in f\}$
denote the \emph{inverse} of $f$, i.e. 
such that $(f^{-1})^{-1}=f$ and
$\range(f)=\dom(f^{-1})$.

\begin{figure}[hbt]
\centerline{\includegraphics[width=0.6\textwidth]{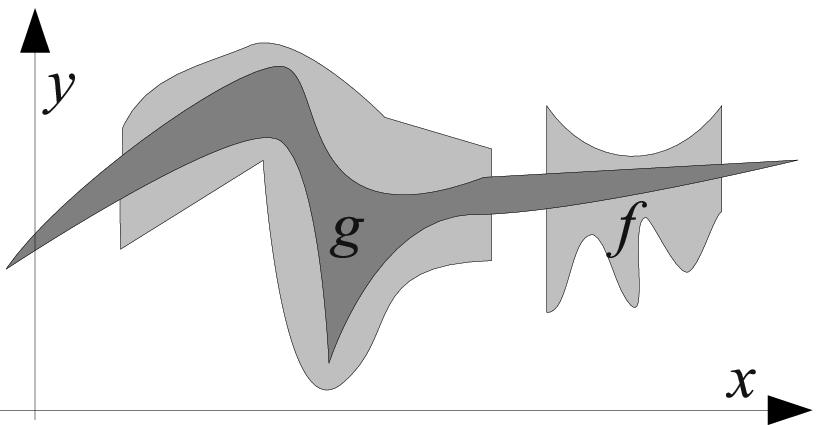}\quad
\smash{\raisebox{-3ex}{\includegraphics[width=0.28\textwidth]{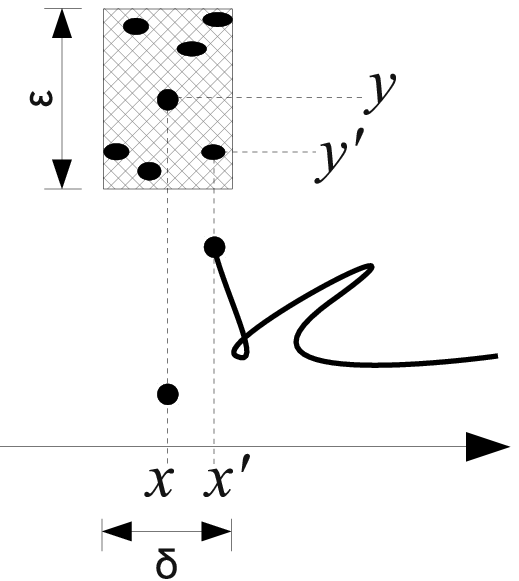}}}}
\caption{\label{f:tighten}%
a) For a relation $g$ (dark gray) to tighten $f$ (light gray) means
no more freedom (yet the possibility) to choose some $y\in g(x)$ 
than to choose some $y\in f(x)$ (whenever possible).
\quad b) Illustrating $\epsilon$--$\delta$--continuity in $(x,y)$
for a relation (black)}
\end{figure}

%%%%%%%%%%%%%%%%%%%%%%%%%%%%%%%%%%%%%%
\section{Continuity for Relations}
For multivalued mappings, the literature knows a variety
of easily confusable notions of continuity
like \mycite{\textsection 7}{KleinThompson} or \cite{Neumaier}.
Some of them capture the intuition that, upon input $x$,
\emph{all} $y\in f(x)$ occur as output for \emph{some}
`nondeterministic' choice \mycite{Section~7}{BrattkaCooper};
or that the `value' $f(x)$ be produced extensionally as a set \cite{Spreen}.
Here we pursue the original conception
that, upon input $x$, \emph{some} value $y$ be output
subject to the condition $y\in f(x)$.

\begin{definition} \label{d:Main}
Let $(X,d)$ and $(Y,e)$ denote metric spaces and
abbreviate $\ball(x,r):=\{x'\in X:d(x,x')<r\}\subseteq X$
and $\cball(x,r):=\{x'\in X:d(x,x')\leq r\}$; similarly for $Y$.
\\
Now fix some $f:\subseteq X\toto Y$
and call $(x,y)\in f$  a \textsf{point of continuity of $f$}
if the following formula holds (cf. Figure~\ref{f:tighten}b):
\begin{equation*}
\forall \varepsilon>0 \;\; \exists \delta>0 \;\;
\forall x'\in\ball(x,\delta)\cap\dom(f) \;\;
\exists y'\in\ball(y,\varepsilon)\cap f(x') \enspace . 
\end{equation*}
\begin{enumerate}
\setlength{\abovedisplayskip}{0.4\baselineskip plus0.4\baselineskip}
\item[a)]
Call $f$ \textsf{strongly continuous} if every
$(x,y)\in f$ is a point of continuity of $f$; equivalently:
\begin{equation*} %\label{e:Strong}
\forall x\in\dom(f) \;\; \forall y\in f(x) \;\;
\forall \varepsilon>0 \;\; \exists \delta>0 \;\;
\forall x'\in\ball(x,\delta)\cap\dom(f) \;\;
\exists y'\in\ball(y,\varepsilon)\cap f(x').
\end{equation*}
\item[b)]
Call $f$ \textsf{weakly continuous} if 
the following holds:
\begin{equation*} %\label{e:Weak}
\forall x\in\dom(f) \;\; \exists y\in f(x) \;\;
\forall \varepsilon>0 \;\; \exists \delta>0 \;\;
\forall x'\in\ball(x,\delta)\cap\dom(f) \;\;
\exists y'\in\ball(y,\varepsilon)\cap f(x'). 
\end{equation*}
\item[c)]
Call $f$ \textsf{uniformly weakly continuous} if the following holds:
\begin{equation*} %\label{e:UnifWeak}
\forall \varepsilon>0 \;\; \exists \delta>0 \;\;
\forall x\in\dom(f) \;\; \exists y\in f(x) \;\;
\forall x'\in\ball(x,\delta)\cap\dom(f) \;\;
\exists y'\in\ball(y,\varepsilon)\cap f(x').
\end{equation*}
\item[d)]
Call $f$ \textsf{nonuniformly weakly continuous} if the following holds:
\begin{equation*} %\label{e:NonunifWeak}
\forall \varepsilon>0 \;\; \forall x\in\dom(f) \;\; 
\exists \delta>0 \;\; \exists y\in f(x) \;\;
\forall x'\in\ball(x,\delta)\cap\dom(f) \;\;
\exists y'\in\ball(y,\varepsilon)\cap f(x').
\end{equation*}
\item[e)]
Call $f$ \textsf{uniformly strongly continuous} if the following holds:
\begin{equation*} 
\forall \varepsilon>0 \;\; \exists \delta>0 \;\;
\forall x\in\dom(f) \;\; \forall y\in f(x) \;\;
\forall x'\in\ball(x,\delta)\cap\dom(f) \;\;
\exists y'\in\ball(y,\varepsilon)\cap f(x').
\end{equation*}
\item[f)]
Call $f$ \textsf{semi-uniformly strongly continuous} if the following holds:
\begin{equation*} 
\forall \varepsilon>0 \;\; 
\forall x\in\dom(f) \;\; 
\exists \delta>0 \;\;
\forall y\in f(x) \;\;
\forall x'\in\ball(x,\delta)\cap\dom(f) \;\;
\exists y'\in\ball(y,\varepsilon)\cap f(x').
\end{equation*}
\end{enumerate}
\end{definition}
Items~a) and b) are
quoted from \mycite{Definition~2.1}{VascoPeter94}.
In the single-valued case, quantifications over $y\in f(x)$
and $y'\in f(x')$ drop out. Here, all a),b),d),f) collapse to
classical continuity; and both c) and e) to uniform continuity.
In the multivalued case, however, these notions are
easily seen distinct. 
Note for instance that in f), $\delta$ may depend on $x$ but not on $y$;
whereas $y$ may depend on $\varepsilon$ in c) but not in b). 
Logical connections between the various notions 
are collected in the following

\begin{lemma} \label{l:Implications}
\begin{enumerate}
\item[a)]
Strong continuity implies weak continuity
\item[b)]
but not vice versa.
\item[c)]
Weak continuity implies nonuniform weak continuity.
\item[d)]
Uniform weak continuity implies nonuniform weak continuity.
\item[e)]
Let $f$ be uniformly weakly continuous
and suppose that $f(x)\subseteq Y$ is compact for every $x\in X$.
Then $f$ is weakly continuous.
\item[f)]
Uniform strong continuity implies semi-uniform strong continuity 
\\ which in turn implies strong continuity.
\item[g)]
For compact $\dom(f)\subseteq X$, nonuniform weak continuity
implies uniform weak continuity.
\item[h)]
If $f(x)\subseteq Y$ is compact for every $x\in X$,\\
then strong continuity implies semi-uniform strong continuity.
\item[j)]
If $f\subseteq X\times Y$ is compact and strongly continuous,
it is uniformly strongly continuous.
\item[k)]
If $f\subseteq X\times Y$ is compact, then
so are $\dom(f)\subseteq X$ and $f[S]\subseteq Y$,
for every closed $S\subseteq X$; in particular $f(x)$ is compact.
\item[$\ell)$]
If $X$ is compact and single-valued total $f:X\to Y$ is continuous,
then both $f\subseteq X\times Y$ 
and its inverse $f^{-1}\subseteq Y\times X$ are compact.
\end{enumerate}
\end{lemma}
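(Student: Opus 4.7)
The plan is to handle the twelve items in three clusters. Parts (a), (c), (d), and (f) are pure quantifier-prefix manipulations on the formulas of Definition~\ref{d:Main}: in each case the hypothesis has a stronger prefix than the conclusion, either by restricting a $\forall$ to a witnessing instance---as in (a), where every $x\in\dom(f)$ admits some $y\in f(x)$, reducing the universal statement over $f$ to an existential over $f(x)$---or by weakening a Skolem dependency, i.e.\ allowing $\delta$ or $y$ to depend on more parameters (as in (c), (d), and both implications of (f)). For the non-implication (b) I would exhibit the counterexample $f:\IR\toto\IR$ with $f(0):=\{0,1\}$ and $f(x):=\{0\}$ for $x\neq 0$: the constant selection $y\equiv 0$ witnesses weak continuity everywhere, while the pair $(0,1)\in f$ fails the point-of-continuity formula at $\varepsilon=1/2$ since every $\delta$-ball around $0$ contains some $x'\neq 0$ whose only value is $0$, nowhere near $1$.

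Parts (e), (g), (h), (j) form the compactness cluster: each upgrades a pointwise-in-some-parameter quantifier to a uniform one by the standard $\varepsilon/2$ finite-subcover argument, applied in a different ambient compact set. For (g), fix $\varepsilon>0$ and for each $x\in\dom(f)$ invoke nonuniform weak continuity at $\varepsilon/2$ to extract witnesses $\delta_x>0$ and $y_x\in f(x)$; the balls $\ball(x,\delta_x/2)$ cover the compact $\dom(f)$, extract a finite subcover $\ball(x_i,\delta_{x_i}/2)$, and set $\delta:=\min_i\delta_{x_i}/2$. For an arbitrary $x\in\dom(f)$ lying in some $\ball(x_i,\delta_{x_i}/2)$, invoke the $x_i$-clause once to pick $y\in\ball(y_{x_i},\varepsilon/2)\cap f(x)$ and again on any $x'\in\ball(x,\delta)\cap\dom(f)\subseteq\ball(x_i,\delta_{x_i})\cap\dom(f)$ to produce $y'\in\ball(y_{x_i},\varepsilon/2)\cap f(x')\subseteq\ball(y,\varepsilon)\cap f(x')$ by the triangle inequality. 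The same template---covering $f(x)$ by $\ball(y,\varepsilon/2)$-balls for (h), and $f$ by open boxes $\ball(x,\delta_{x,y}/2)\times\ball(y,\varepsilon/2)$ for (j)---handles the other two. Part (e) is the sequential-compactness variant: fix $x\in\dom(f)$, apply uniform weak continuity at $\varepsilon=1/n$ to harvest $y_n\in f(x)$, extract a convergent subsequence $y_{n_k}\to y\in f(x)$ in the compact $f(x)$, and verify via the triangle inequality that this single $y$ serves as the outer existential witness demanded by weak continuity.

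Parts (k) and ($\ell$) are purely topological. For (k), $\dom(f)=\pi_X(f)$ and $f[S]=\pi_Y\bigl(f\cap(S\times Y)\bigr)$ are continuous images of compact sets (noting that $f\cap(S\times Y)$ is closed in the compact $f$, hence compact), and each $f(x)=f[\{x\}]$ follows by specialization to $S=\{x\}$. For ($\ell$), $f$ and $f^{-1}$ are the images of the compact $X$ under the continuous maps $x\mapsto(x,f(x))$ and $x\mapsto(f(x),x)$ respectively. I expect the main obstacle across the lemma to be the fiddly bookkeeping in the compactness cluster---in particular the double invocation of the $x_i$-clause in (g), where the naive choice $y:=y_{x_i}$ need not belong to $f(x)$, so one must first re-apply the $x_i$-clause to obtain a legitimate $y\in f(x)$ close to $y_{x_i}$ before using it as the hub of the triangle-inequality estimate.
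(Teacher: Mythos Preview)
Your proposal is correct and follows essentially the same strategy as the paper's proof: the same quantifier-shuffling for (a), (c), (d), (f); the same sequential-compactness extraction for (e); and the same finite-subcover template for (g), (h), (j). The minor deviations are cosmetic: for (b) you supply a simpler two-point counterexample where the paper cites one from \cite{VascoPeter94}; for (g) you run the $\varepsilon/2$--triangle argument directly, whereas the paper first reformulates nonuniform weak continuity via an auxiliary predicate $\Phi(f,\varepsilon,x,\delta)$ that eliminates $y$ before covering; and for (k), ($\ell$) you invoke continuous images of compact sets while the paper argues via subcovers and sequential compactness---all equivalent and equally standard.
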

Note that the (classically trivial) implication from
(weak) uniform continuity to (weak) continuity in e)
is based on the (again, classically trivial) hypothesis
that $f(x)\subseteq Y$ be compact. Similarly, the classical 
fact that continuity on a compact set classically yields uniform 
continuity is generalized in g)+c).

\begin{figure}[htb]
\centerline{\includegraphics[width=0.99\textwidth]{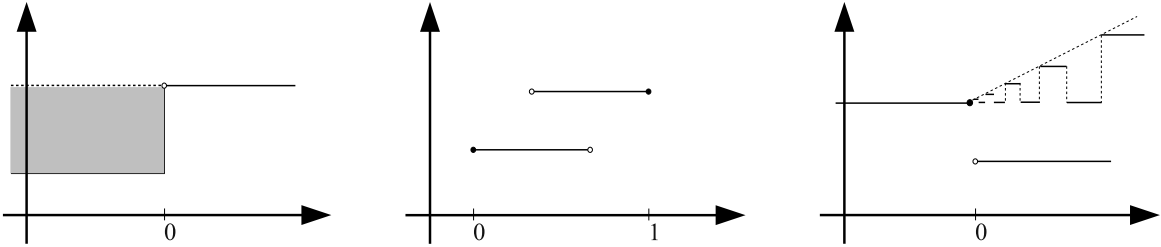}}
\caption{\label{f:multival}%
a) Example of a uniformly weakly continuous but not weakly continuous relation.
b) A semi-uniformly strongly continuous relation which is not uniformly strongly continuous.
c) A compact, weakly and uniformly weakly continuous relation 
which is not computable relative to any oracle.}
\end{figure}

\begin{proof} Items~a),c), d), and f) are obvious.
\begin{enumerate}
\item[b)]
is due to \mycite{Proposition~2.3(3)}{VascoPeter94};
cmp.~Example~\ref{x:Examples}d).
\item[e)]
Fix $x\in\dom(f)$. By hypothesis there exists,
to every $\varepsilon=1/n$, some $\delta_n$
and $y_n\in f(x)$ with:
$\forall x'\in\ball(x,\delta_n)\cap\dom(f)\;
\exists y'\in\ball(y_n,1/n)\cap f(x')$.
Now since $f(x)$ is compact, there some subsequence 
$y_{n_m}$ of $y_n$ converges to, say, $y_0\in f(x)$
with $d(y_{n_m},y_0)\leq1/m$.
We claim that this $y_0$ (which does not depend on $\varepsilon$ anymore)
satisfies
$$ \forall \varepsilon=2/m>0 \;\; \exists \delta:=\delta_{n_m}>0 \;\;
\forall x'\in\ball(x,\delta)\cap\dom(f) \;\;
\exists y'\in\ball(y_0,\varepsilon)\cap f(x'). $$
Indeed,
to arbitrary $x'\in\ball(x,\delta_{n_m})\cap\dom(f)$,
the hypothesis yields some $y'\in\ball(y,1/m)\cap f(x')$.
Then, by triangle inequality, it follows
$y'\in\ball(y_0,2/m)$. \\
Note that a different $x$ may require a different subsequence $n_m$;
hence $\delta$ may become dependent on $x$ even if it did not before.
\item[g)]
We claim that 
Definition~\ref{d:Main}d) is equivalent to the formula
\begin{equation} \label{e:Nonunif}
\forall \varepsilon>0 \;\; \forall x\in\dom(f) \;\;
\exists \delta>0: \quad \Phi(f,\varepsilon,x,\delta)
\end{equation}
where $\Phi(f,\varepsilon,x,\delta)$ abbreviates the predicate
\begin{equation*} %\label{e:Abbrev}
\forall x'\in\ball(x,\delta)\cap\dom(f) \;\;
\exists y'\in f(x') \;\; 
\forall x''\in\ball(x,\delta)\cap\dom(f) \;\;
\exists y''\in f(x''):
\quad e(y',y'')<\varepsilon 
\end{equation*}
Indeed, $x',x''\in\ball(x,\delta)$ yield
$y'\in f(x')\cap\ball(y,\varepsilon)$ and
$y''\in f(x'')\cap\ball(y,\varepsilon)$,
hence $e(y',y'')<2\varepsilon$ by triangle inequality; 
and, conversely, $x':=x$ yields $y\in f(x)$.
Next observe that, again by triangle inequality, 
$\Phi(f,\varepsilon,x,\delta)$ implies
$\Phi(f,\varepsilon,z,\delta/2)$ for all
$z\in\ball(x,\delta/2)\cap\dom(f)$. 
Now for arbitrary but fixed $\varepsilon$
and to every $x\in\dom(f)$ there exists by hypothesis
some $0<\delta=\delta(x)$ such that
$\Phi\big(f,\varepsilon,x,\delta(x)\big)$ holds.
The open sets $\ball\big(x,\delta(x)/2\big)$ cover $\dom(f)$; 
and by compactness, finitely many of them suffice to do so: 
say, $\ball\big(x,\delta(x_i)/2\big)$, $i=1,\ldots,I$.
Now take $\bar\delta>0$ as the minimum over
these finitely many $\delta(x_i)/2$:
it will satisfy $\Phi(f,\varepsilon,\bar y,\bar\delta)$ 
for all $\bar y\in\dom(f)$.
\item[h)]
Similarly to g), consider the predicate
\begin{multline*}
\forall \varepsilon>0\;\;
\forall x\in\dom(f)\;\;
\forall y\in f(x)\;\;
\exists \delta\in(0,\varepsilon) \\
\underbrace{
%\Phi(f,\varepsilon,x,y,\delta)\qquad:=\qquad 
\forall x',x''\in\ball(x,\delta)\cap\dom(f)\;\;
\forall y'\in f(x')\cap\ball(y,\delta)\;\;
\exists y''\in f(x'')\cap\ball(y',\varepsilon)}_{
=:\Phi(f,\varepsilon,x,y,\delta)}
\end{multline*}
and note that it is equivalent to strong continuity:
The restriction to $\delta<\varepsilon$ is no loss of generality;
$y'\in\ball(y,\delta)$ and $y''\in f(x'')\cap\ball(y,\varepsilon)$
according to b) implies $e(y',y'')<\delta+\varepsilon<2\varepsilon$ 
arbitrary; whereas, conversely, 
strong continuity is recovered with $x':=x$ and $y':=y$.
Finally, $\Phi(f,\varepsilon,x,y,\delta)$ implies
$\Phi(f,\varepsilon,x,\bar y,\delta/2)$ for all
$\bar y\in\ball(y,\delta/2)$. 
Now the balls $\ball\big(y,\delta(y)/2\big)$, $y\in f(x)$,
cover $f(x)$; and by compactness, finitely many of them
suffice to do so.
\item[j)]
This time abbreviate 
\[ \Phi(f,x,y,\varepsilon,\delta) \quad:=\quad
\forall x'\in\ball(x,\delta)\cap\dom(f)
\;\;\exists y'\in f(x')\cap\ball(y,\varepsilon) \]
and observe that strong continuity 
$\forall\varepsilon>0 \;\forall (x,y)\in f \;\exists\delta>0
\;\; \Phi(f,x,y,\varepsilon/2,\delta)$
is equivalent to 
$\forall\varepsilon>0 \;\forall (x,y)\in f \;\exists\delta>0
\;\; \Phi(f,x,y,\varepsilon,\delta/2)$.
Moreover, $\Phi(f,x,y,\varepsilon/2,\delta)$ and
$(\bar x,\bar y)\in f\cap\big(\ball(x,\delta/2)\times\ball(y,\varepsilon/2)\big)$
together imply $\Phi(f,\bar x,\bar y,\varepsilon,\delta/2)$.
For fixed $\varepsilon>0$ there exists by hypothesis to each
$(x,y)\in f$ some $\delta=\delta(x,y)$ such that 
$\Phi(f,x,y,\varepsilon/2,\delta)$.
The open balls $\ball\big(x,\delta(x,y)/2)\times\ball(y,\varepsilon/2)$,
$(x,y)\in f$,
thus cover $f$; and by compactness, already finitely many of them 
suffice to do so.
Taking $\bar\delta$ as the minimum of their corresponding
$\delta(x,y)$, we conclude
that $\Phi(f,x,y,\varepsilon,\bar\delta/2)$ holds for all $(x,y)\in f$:
uniform strong continuity.
\item[k)]
Let $U_i\subseteq X$ ($i\in I$) denote an open covering of $\dom(f)$.
Then $U_i\times Y$ is an open covering of $f$,
hence contains a finite subcover: whose projection
onto the first component is a finite subcover of $U_i$.
\\
Similarly, let $V_j\subseteq Y$ ($j\in J$)
denote an open covering
of $f[S]\subseteq Y$. Then $X\times V_j$, 
together with $(X\setminus S)\times Y$,
constitutes an open covering of $f$;
hence contains a finite subcover:
and the corresponding $V_j$ yield 
a finite subcover of $f[S]$.
\\
Finally, $S:=\{x\}$ is closed
and thus also $f[S]=f(x)$.
\item[$\ell)$]
Let $(x_n,y_n)\subseteq f$ be a sequence.
Since $(x_n)\subseteq X$ compact, it has a converging subsequence;
w.l.o.g. $(x_n)$ itself. Now by continuity and single-valuedness, 
$y_n=f(x_n)\to f(x)$ converges. Thus, $f$ is compact;
and homeomorphic to $f^{-1}$.
\qed\end{enumerate}\end{proof}
We say that $f$ is \textsf{pointwise compact} if 
$f(x)\subseteq Y$ is compact for every $x\in\dom(f)$.
Any single-valued $f$ automatically satisfies this condition;
which in turn implies being \emph{pointwise closed} 
as required in Fact~\ref{f:VascoPeter94}.
%XXX: strong continuity: maps compact sets to compact sets?
Pointwise compactness is essential for uniform weak continuity 
to imply weak continuity in Lemma~\ref{l:Implications}e):

\begin{myexample} \label{x:Examples}
\begin{enumerate}
\item[a)]
The multifunction from {\rm\cite[\textsc{Example}~27c]{DiscreteAdvice}},
namely
\[ f:[-1,+1]\toto[0,1], \qquad
0\geq x\mapsto [0,1), \quad 0<x\mapsto\{1\} \]
depicted in Figure~\ref{f:multival}a),
is uniformly weakly continuous but not weakly continuous. 
\item[b)]
The multifunction $g:[0,1]\toto[0,1]$ with
$\graph(g)=\big([0,2/3)\times\{0\}\big)\cup\big((1/3,1]\times\{1\}\big)$
depicted in Figure~\ref{f:multival}b)
has compact $\dom(g)$ and $g(x)$ for every $x$
but $\graph(g)$ is not compact.
Moreover, $g$ is semi-uniformly strongly continuous
but not uniformly strongly continuous.
\item[c)]
The relation 
$\big(\IQ\times(\IR\setminus\IQ)\big)\cup\big((\IR\setminus\IQ)\times\IQ\big)$
from \mycite{Example~7.2}{VascoPeter94} is uniformly strongly continuous.
\item[d)]
Inspired by \mycite{Proposition~2.3(3)}{VascoPeter94},
the relation $g:[-1,+1]\toto[-1,+1]$
depicted in Figure~\ref{f:multival}c) with graph
\begin{equation} \label{e:VascoPeter94}
 \{(x,0):x\leq0\}\;\cup\;\{(x,-1):x>0\}\;
\cup\;\big\{\big(x,\tfrac{1+(-1)^n}{n+1}\big):n\in\IN,1/(n+1)\leq x\leq 1/n\big\}
\end{equation}
is compact and both weakly continuous 
and uniformly weakly continuous
but not strongly continuous.
\end{enumerate}
\end{myexample}
\begin{proof}
\begin{enumerate}
\item[a)]
To assert uniform weak continuity, consider
$\delta=\delta(\varepsilon):=\varepsilon$. Moreover
let $y=y(x,\varepsilon):=1$ for $x>0$ and
$y(x,\varepsilon):=1-\varepsilon/2$ for $x\leq0$.
Then, in case $x'>0$, choose $y':=1$;
and in case $x'\leq0$, chose $y':=1-\varepsilon/2$.
\\
Suppose $f$ is weakly continuous at $x:=0$, i.e.
there exists some appropriate $y\in f(x)=[0,1)$.
The consider $\varepsilon:=1-y$ and the induced
$\delta>0$ as well as $x':=\delta/2$:
No $y'\in f(x')=\{1\}$ can satisfy 
$\varepsilon>|y'-y|=1-y$, contradiction.
\item[b)]
Note $\dom(g)=[0,1]$ and $g(x)=\{0\}$
for $x\leq1/3$, $g(x)=\{0,1\}$ for
$1/3<x<2/3$, and $g(x)=\{1\}$ for
$x\geq2/3$: all compact.
Concerning semi-uniform strong continuity,
for $x\leq1/3$ let $\delta:=1/3$ and $y':=0=y$;
for $x\geq2/3$ let $\delta:=1/3$ and $y':=1=y$;
whereas for $1/3<x<2/3$, choose 
$\delta:=\min(2/3-x,x-1/3)$ and $y':=y$.
Uniform strong continuity leads to a
contradiction when considering 
$x:=1/3+\delta/2$ and $y:=1$
and $x':=1/3$.
\item[c)]
Let $\delta:=1$; 
then observe that $\IQ$ is dense in $\IR\setminus\IQ$
and $\IR\setminus\IQ$ is dense in $\IQ$.
\item[d)] 
Concerning weak continuity, 
in case $x\leq0$ choose $y:=0$ and $\delta:=\varepsilon$:
then, to $x'\in\ball(x,\delta)$,
$y':=0$ will do for $x'\leq 0$ as well as for
every $x'\in[1/(n+1),1/n]$ with $n$ odd;
and $y':=2/(n+1)$ for $x'\in[1/(n+1),1/n]$ with even $n$.
In case $x>0$ choose $y:=-1$ and $\delta:=x$;
then $x'\in\ball(x,\delta)$
implies $x'>0$ and $y':=-1$ works.
\\
Regarding uniform weak continuity,
let $\delta:=\varepsilon$ and distinguish 
cases $x<\varepsilon$ and $x\geq\varepsilon$.
In the former case, 
$y:=0$ will do for $x\leq0$ and for $x\in[1/(n+1),1/n]$ with $n$ odd;
and $y:=2/(n+1)$ for $x\in(0,\varepsilon)\cap[1/(n+1),1/n]$ with even $n$.
In the latter case, $y:=-1$ works.
\\
Strong continuity is violated, e.g., at
$(x,y)=(1/2,2/3)$ for $\varepsilon:=1/4$.
\qed\end{enumerate}\end{proof}

%%%%%%%%%%%%%%%%%%%%%%%%%%%%%%%%%%%%%%%%%%%%%%%%%%%%%%%%%%%%%%%%%%%%%%
\subsection[\ldots and Computability of Relations]{Continuity and Computability of Relations}
Recall 
that (relative) computability of a multifunction $f:\subseteq\IR\toto\IR$
means that some (oracle) Turing machine can, 
upon input of any sequence of integer fractions $a_n/b_n$
with $|x-a_n/b_n|\leq2^{-n}$ for every $n\in\IN$ and some $x\in\dom(f)$,
output a sequence $u_m/v_m$ of integer fractions with
$|y-u_m/v_m|\leq2^{-m}$ for every $m\in\IN$ and some $y\in f(x)$.
More generally, 
a multifunction $f:\subseteq A\toto B$ between represented spaces $(A,\alpha)$
and $(B,\beta)$ is considered (relatively) computable if it admits a 
(relatively) computable $(\alpha,\beta)$--realizer,
that is a function $F:\subseteq\Cantor\to\Cantor$ 
mapping every $\alpha$--name of some $a\in\dom(a)$ to
a $\beta$--name of some $b\in f(a)$
\mycite{Definition~3.1.3}{Weihrauch}.

\begin{lemma} \label{l:Tighten}
Define the composition of multifunction $f:\subseteq X\toto Y$
and $g:\subseteq Y\toto Z$ as
\begin{equation} \label{e:Composition}
 g\circ f  \;:=\;
  \big\{(x,z) \big| x\in X, z\in Z, f(x)\subseteq\dom(g), 
  \;\exists y\in Y: (x,y)\in f\wedge (y,z)\in g \} \enspace .
\end{equation}
\begin{enumerate}
\item[a)]
$\id_X$ tightens $f^{-1}\circ f$; 
if $f$ is single-valued, then $f\circ f^{-1}=\id_{\range(f)}$.
\item[b)]
If $f'$ tightens $f$ and $g'$ tightens $g$,
then $g'\circ f'$ tightens $g\circ f$.
\item[c)]
If $\range(f)\subseteq\dom(g)$ holds
and both $f$ and $g$ are compact,
then so is $g\circ f$.
\item[d)]
If $\range(f)\subseteq\dom(g)$ holds
and if both $f$ and $g$ map compact sets to compact sets,
then so does $g\circ f$.
\item[e)]
Fix representations $\alpha$ for $X$ and $\beta$ for $Y$.
A multifunction $F:\subseteq\Cantor\toto\Cantor$ 
tightens $\beta^{-1}\circ f\circ\alpha$ ~iff~
$\beta\circ F\circ\alpha^{-1}$ tightens $f$.
\item[f)]
A function $F:\subseteq\Cantor\to\Cantor$ 
is an $(\alpha,\beta)$--realizer of $f$ ~iff~
$F$ tightens $\beta^{-1}\circ f\circ\alpha$ ~iff~
$\beta\circ F\circ\alpha^{-1}$ tightens $f$.
\end{enumerate}
\end{lemma}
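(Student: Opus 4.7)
The plan is to verify items (a)--(f) by direct unfolding of the definitions of strict composition and tightening; only part (c) carries genuine topological content, the rest amounting to careful bookkeeping around the side-condition $f(x)\subseteq\dom(g)$. For (a), for $x\in\dom(f)$ the side-condition $f(x)\subseteq\dom(f^{-1})=\range(f)$ is automatic, and any $y\in f(x)$ witnesses $(x,x)\in f^{-1}\circ f$, giving $\dom(f^{-1}\circ f)=\dom(f)$ and $\id_X(x)=\{x\}\subseteq(f^{-1}\circ f)(x)$, which is tightening. When $f$ is single-valued, $y\in\range(f)$ and $x\in f^{-1}(y)$ force $f(x)=y$, so $(f\circ f^{-1})(y)=\{y\}$. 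For (b), $x\in\dom(g\circ f)$ yields $x\in\dom(f)\subseteq\dom(f')$, hence $\emptyset\neq f'(x)\subseteq f(x)\subseteq\dom(g)\subseteq\dom(g')$; picking $y\in f'(x)$ and $z\in g'(y)$ supplies $x\in\dom(g'\circ f')$, while $(g'\circ f')(x)\subseteq(g\circ f)(x)$ follows from the two pointwise inclusions.

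For (c), I would realize $g\circ f$ as the continuous image in $X\times Z$ of the fibered product $K:=\{(x,y,y',z)\in f\times g : y=y'\}$. Since $Y$ is metric, hence Hausdorff, its diagonal is closed, so $K$ is the preimage of a closed set under a continuous map from the compact $f\times g$; thus $K$ is compact and so is its projection. The hypothesis $\range(f)\subseteq\dom(g)$ makes the side-condition $f(x)\subseteq\dom(g)$ automatic, so this projection coincides with $g\circ f$. Part (d) is then immediate: the same hypothesis yields $(g\circ f)(x)=g[f(x)]$ pointwise, whence $(g\circ f)[S]=g[f[S]]$, and the two preservation assumptions apply in turn.

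Parts (e) and (f) amount to unpacking nested tightenings. Treating $\alpha,\beta$ as the customary surjective representations, the strictness side-conditions in $\beta^{-1}\circ f\circ\alpha$ reduce simply to $\alpha(p)\in\dom(f)$, so ``$F$ tightens $\beta^{-1}\circ f\circ\alpha$'' translates literally to ``for every $p$ with $\alpha(p)\in\dom(f)$, $p\in\dom(F)$ and $\beta(F(p))\subseteq f(\alpha(p))$''. The symmetric unpacking of ``$\beta\circ F\circ\alpha^{-1}$ tightens $f$'' delivers the same condition, now quantified over all $\alpha$-names of all $x\in\dom(f)$. Part (f) is then the single-valued restatement, which is precisely the definition of $(\alpha,\beta)$-realizer recalled just before the lemma.

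The main obstacle throughout is tracking when the strictness side-condition $f(x)\subseteq\dom(g)$ is automatic---from $\range(f)\subseteq\dom(g)$ in (c)/(d), or from surjectivity of a representation in (e)/(f)---versus when it must be verified by hand, as in (b). Once this is kept straight, no item requires anything beyond careful symbol-pushing.
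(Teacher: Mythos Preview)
Your proposal is correct and, for items (a), (b), (d), and (f), essentially identical to the paper's proof. For (c) you use the fibered product in $X\times Y\times Y\times Z$ and closedness of the diagonal, whereas the paper works directly in $X\times Y\times Z$ with the compact set $(f\times\range(g))\cap(\dom(f)\times g)$; this is a cosmetic difference, and your version is arguably more transparent.

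The only substantive divergence is in (e). You unfold both tightening conditions by hand, using surjectivity of $\alpha$ and $\beta$ to discharge the side-conditions, and observe that both unpack to the same first-order statement about names. The paper instead argues algebraically: from ``$F$ tightens $\beta^{-1}\circ f\circ\alpha$'' it composes on both sides via (b) to get that $\beta\circ F\circ\alpha^{-1}$ tightens $\beta\circ\beta^{-1}\circ f\circ\alpha\circ\alpha^{-1}$, then invokes (a) (single-valuedness of $\beta,\alpha$) to collapse $\beta\circ\beta^{-1}$ and $\alpha\circ\alpha^{-1}$ to identities; the converse similarly uses that $\id$ tightens $\beta^{-1}\circ\beta$ and $\alpha^{-1}\circ\alpha$. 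The paper's route is slicker and explains \emph{why} (a) and (b) were recorded in the first place, while yours is more self-contained and makes the role of surjectivity explicit. Either is fine.
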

Motivated by f),
let us call a multifunction $F$ as in e) an
$(\alpha,\beta)$--\emph{multirealizer} of $f$.
\begin{proof}
\begin{enumerate}
\item[a)]
Note $f(x)\subseteq\dom(f^{-1})$ and
$f^{-1}\circ f=\{(x,x'):\exists y: (x,y),(x',y)\in f\}$.
\item[b)]
Note $\dom(g\circ f)=\{x\in\dom(f):f(x)\subseteq\dom(g)\}$;
hence $\dom(f)\subseteq\dom(f')\wedge\dom(g)\subseteq\dom(g')\wedge f(x)\supseteq f'(x)
\wedge g(y)\supseteq g'(y)$ implies $\dom(g\circ f)\subseteq\dom(g'\circ f')$
 as well as 
 $\big(g'\circ g'\big)(x)=\{z:\exists y\in f'(x): z\in g'(y)\}\subseteq\big(g\circ f\big)(x)$;
 cmp. \mycite{Lemma~8.3}{Multifunctions}.
\item[c)]
Since $\range(f)\subseteq\dom(g)$, 
$g\circ f$ is the image of compact 
$\big(f\times\range(g)\big)\cap\big(\dom(f)\times g\big)\subseteq X\times Y\times Z$
under the continuous projection 
$\Pi_{1,3}:X\times Y\times Z\ni (x,y,z)\mapsto(x,z)\in X\times Z$.
\item[d)]
immediate from $\big(g\circ f\big)[S]=g\big[f[S]\big]$,
holding under the hypothesis $\range(f)\subseteq\dom(g)$.
\item[e)]
If $F$ tightens $\beta^{-1}\circ f\circ\alpha$,
then $\beta\circ F\circ\alpha^{-1}$
tightens $\beta\circ\beta^{-1}\circ f\circ\alpha\circ\alpha^{-1}$
due to b); which in turn coincides with
$\id_X\circ f\circ\id_Y=f$ according to a). \\
Conversely, $F=\id_{\Cantor}\circ F\circ\id_{\Cantor}$
tightens $\beta\circ\beta^{-1}\circ F\circ\alpha\circ\alpha^{-1}$
by a); which in turn tightens $\beta^{-1}\circ f\circ\alpha$
by hypothesis and by b).
\item[f)]
$F$ being an $(\alpha,\beta)$--realizer of $f$ means
$\dom(F)\supseteq\dom(f\circ\alpha)$ and 
$\beta\big(F(\bar\sigma)\big)\in f\big(\alpha(\bar\sigma)\big)$
for every $\bar\sigma\in\dom(f\circ\alpha)=\dom(\beta^{-1}\circ f\circ\alpha)$;
now apply e).
\qed\end{enumerate}\end{proof}
The above notion composition for relations is,
like that of `tightening', 
from \mycite{Section~3}{Multifunctions}. 
Mapping compact sets to compact sets is a property
which turns out useful below. It includes
both compact relations 
(Lemma~\ref{l:Implications}k)
and continuous functions:

\begin{myexample} \label{x:ClosedMap} 
\begin{enumerate}
\item[a)]
Let $f:X\to Y$ be a single-valued
continuous function.
Then $f$ maps compact sets to compact sets.
\item[b)]
The inverse $(\rhosd^d)^{-1}$
of the $d$-dimensional signed digit representation
maps compact set to compact sets.
\item[c)]
The functions $\id:x\to x$ and $\sgn:\IR\to\{-1,0,1\}$ both map compact
sets to compact sets; however their Cartesian
product $\id\times\sgn$ does not map compact
$\{(x,x):-1\leq x\leq 1\}$ to a compact set.
\end{enumerate}
\end{myexample}
Indeed, the signed digit representation
$\rhosd$ is well-known \emph{proper}
\cite[pp.209-210]{Weihrauch}, i.e. preimages
of compact sets are compact.

Focusing on complete separable metric spaces and pointwise compact multifunctions,
strong continuity is in view of Fact~\ref{f:VascoPeter94} (in general strictly) 
stronger than relative computability;
whereas weak continuity is (again in general strictly)
weaker than relative computability:

\begin{myexample} \label{x:VascoPeter94}
\begin{enumerate}
\item[a)]
The relation (\ref{e:VascoPeter94}) 
from Example~\ref{x:Examples}d)
is not computable relative to any oracle.
\item[b)]
The relation from Example~\ref{x:Examples}c) 
is (uniformly strongly continuous but, lacking
pointwise compactness) not computable relative to any oracle.
\item[c)]
The closure of the relation from Example~\ref{x:Examples}b),
that is with graph $\big([0,2/3]\times\{0\}\big)\cup\big([1/3,1]\times\{1\}\big)$,
is computable but not strongly continuous.
\end{enumerate}
\end{myexample}
\begin{proof} 
\begin{enumerate}
\item[a)] by contradiction: 
Suppose some oracle machine $\calM$ computes this relation.
On input of the rational sequence $(0,0,0,\ldots)$ as 
a $\myrho$--name of $x:=0$ it thus outputs a $\myrho$--name
of $y=0$, i.e. a rational sequence $(p_m)$ with $|p_m|<2^{-m}$.
In particular it prints $p_1>-1/2$ after having read only
finitely many elements from the input sequence; 
say, up to the $(N-1)$-st element. Now consider the behavior of
$\calM$ on the input sequence $(0,0,\ldots,0,2^{-N},2^{-N},\ldots)$
as $\myrho$--name of $x':=2^{-N}$: Its output sequence $(p'_m)$ 
will, again, begin with $p'_1=p_1>-1/2$ 
and thus cannot be a $\myrho$--name of $-1$. 
Since $g(x')=\{-1,0,2/(1+2^N)\}$, it must therefore satisfy
$|p'_m-y|<2^{-m}$ for all $m$ and for one of $y=0=:y_0$ or
$y=2/(1+2^N)=:y_1$. 
In particular, $p'_{N+1}$ satisfies
$y_j\in\ball(p'_{N+1},2^{-N-1})\not\ni y_{1-j}$
for the unique $j\in\{0,1\}$ with $y=y_j$
and is printed upon reading
only the first, say, $N'\geq N$ elements of 
$(0,0,\ldots,2^{-N},2^{-N},\ldots)$.
Finally it is easy to extend this finite sequence to a
$\myrho$--name of some $x''$ close to $x'$ with
$y_j\not\in g(x'')\ni y_{1-j}$;
and upon this input $\calM$ will now, again, 
output elements $p'_1,\ldots,p'_{N+1}$ 
which, however, cannot be extended to a $\myrho$--name of any
$y''\in g(x'')$: contradiction.
\item[b)] see \cite[p.24]{VascoPeter94}.
\item[c)] Immediate.
\qed\end{enumerate}\end{proof}
For relations with %(almost) 
discrete range, on the other hand, we have

\begin{theorem} \label{t:Pauly}
Let $X$, $Y$ be computable metric spaces 
\mycite{Definition~8.1.2}{Weihrauch}.
%\begin{enumerate}
\\ %\item[a)]
If $Y$ is discrete and
$f : \subseteq X \toto Y$ weakly continuous,
then $f$ is relatively computable. 
%\item[b)]
%Suppose $Y$ has exactly one accumulation point. 
%If total $f : X \toto Y$ is compact and weakly continuous,
%it is relatively computable.  %Luecke im Beweis YYY
%\item[c)]
%In b), the compactness hypothesis on $f$ cannot be
%weakened to compact $X$ and weakly continuous 
%$f:X\toto Y$ mapping compact sets to compact sets.
%\end{enumerate}
\end{theorem}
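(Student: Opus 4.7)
The plan is to build a single oracle that records, for each basic open ball in $X$ and each candidate output in $Y$, whether that candidate lies in $f(x')$ for \emph{every} $x'\in\dom(f)$ meeting the ball, and then to recover a realizer by dovetailed search using this oracle.

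First I would sharpen weak continuity using discreteness of $Y$. Every $y\in Y$ is isolated, so there is $r(y)>0$ with $\ball(y,r(y))\cap Y=\{y\}$. Applying the weak-continuity condition at $x\in\dom(f)$ to the witness $y_x\in f(x)$ with the particular choice $\varepsilon := r(y_x)/2$ collapses the inner assertion to the much stronger statement that there exists $\delta_x>0$ such that $y_x\in f(x')$ for every $x'\in\ball(x,\delta_x)\cap\dom(f)$. In other words, around each point of $\dom(f)$ some value of $f$ is \emph{constantly} attained on a neighborhood.

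Now fix, from the computable metric space data, a computable dense sequence $(d_i)_{i\in\IN}$ in $X$ and a computable enumeration $(y_k)_{k\in\IN}$ of the necessarily countable space $Y$. As oracle take
\[
A \;:=\; \big\{ \langle i,n,k\rangle \in \IN^3 \,:\, y_k\in f(x')\ \text{for every}\ x'\in\ball(d_i,2^{-n})\cap\dom(f)\big\}\,,
\]
encoded as a subset of $\{0,1\}^*$. The oracle Turing machine, on input a Cauchy name $\sigma$ of some $x\in\dom(f)$, dovetails over all triples $\langle i,n,k\rangle$: it queries $A$ and, in parallel, attempts to verify the semidecidable condition $d(x,d_i)<2^{-n-1}$ by reading a sufficiently long prefix of $\sigma$ and computing $d(x,d_i)$ to the required precision. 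As soon as both succeed for some triple, it outputs a $\beta$-name of $y_k$; since $y_k$ is isolated, such a name is easy to produce (e.g.\ a constant sequence once the enumeration has pinned $y_k$ down).

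Termination follows from the sharpened weak continuity together with density: choose $n$ with $2^{-n}\leq 2\delta_x/3$ and then $d_i$ with $d(x,d_i)<2^{-n-1}$, so that by the triangle inequality $\ball(d_i,2^{-n})\subseteq\ball(x,\delta_x)$, placing $\langle i,n,k\rangle$ with $y_k=y_x$ into $A$ and ensuring the verification succeeds. Soundness is immediate: if the search halts on $\langle i,n,k\rangle$, then $d(x,d_i)<2^{-n-1}<2^{-n}$ shows $x\in\ball(d_i,2^{-n})\cap\dom(f)$, whence $y_k\in f(x)$ by definition of $A$. The main obstacle is really just the first paragraph, converting weak continuity into local constancy via isolation radii; the rest is standard Cauchy-name bookkeeping, and the oracle $A$ is precisely what the algorithm cannot decide for itself.
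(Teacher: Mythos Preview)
Your proof is correct and follows essentially the same approach as the paper: use discreteness to collapse weak continuity to local constancy of some value $y_x$ on a neighbourhood of each $x$, encode as an oracle which basic balls witness this for which $y$, then dovetail-search. One cosmetic difference is that the paper takes a single global $\varepsilon:=\min_{y\neq y'}d(y,y')$, whereas you use per-point isolation radii $r(y)$; your version is the cleaner one, since a discrete metric space need not have uniformly bounded-below pairwise distances.
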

\begin{proof} 
%\begin{enumerate}
%\item[a)]
Since $Y$ is discrete, 
$\varepsilon:=\min_{y\neq y'} d(y,y')>0$.
Now to $y\in Y$ consider the set
\[ U_y \;:=\; \big\{ x\in\dom(f): \exists\delta>0 \;
\forall x'\in\ball(x,\delta)\cap\dom(f) \; \exists y'\in f(x')\cap\ball(y,\varepsilon)\big\} \]
and note that it is open in $\dom(f)$ because
$y'\in\ball(y,\varepsilon)$ requires $y'=y$.
Hence $U_y=\dom(f)\cap\bigcup_{j\in\IN} \ball(q_{j,y},1/n_{j,y})$
for certain $n_{j,y}\in\IN$ and $q_{j,y}$ from the fixed
dense subset of $X$.
Now consider an encoding of (names of) these $q_{j,y}$ and 
$n_{j,y}$ as oracle.
Then, given $x\in\dom(f)$, search for some $(j,y)$ with
$x\in\ball(q_{j,y},1/n_{j,y})\subseteq U_y$:
when found, such $y$ by construction belongs to $f(x)$
and, conversely, weak continuity asserts $x$ to belong to
$U_y$ for some $y$.
\qed%\end{enumerate}
\end{proof}

%%%%%%%%%%%%%%%%%%%%%%%%%%%%%%%%%%
\subsection{Motivation for Uniform Continuity}
Many proofs of uncomputability of relations
or of topological lower bounds \cite{DiscreteAdvice}
apply weak continuity as a necessary condition:
\emph{merely} necessary, in view of the above example,
and thus of limited applicability.
The rest of this work thus explores topological 
conditions stronger than weak continuity yet necessary 
for relative computability. 

Uniform continuity of functions is
such a stronger notion --- and an important concept
of its own in mathematical analysis --- yet does
not straightforwardly (or at least not unanimously)
extend to multifunctions. Guided by the equivalence between
uniform continuity and relative computability for
functions with compact graph,
our aim is a topological characterization
of oracle-computable compact real relations.
One such characterization is Fact~\ref{f:VascoPeter94}; 
however we would like
to avoid (second-order) quantifying over tightenings.

To this end observe that every (relatively) computable function $f$
is (relatively) effectively locally uniformly continuous
\mycite{Theorem~6.2.7}{Weihrauch}, that is, uniformly 
continuous on every compact subset $K\subseteq\dom(f)$ \cite{Kreitz}:
\[ \forall\varepsilon>0 \; \exists\delta>0 \; \forall x\in K\; 
\forall x'\in\ball(x,\delta)\cap K: \; d\big(f(x),f(x')\big)<\varepsilon \enspace . \]
This suggests to look for related concepts for multifunctions, i.e. 
where $\delta$ does not depend on $x$. Uniform weak continuity 
in the sense of Definition~\ref{d:Main}c), however, fails to strengthen
weak continuity because it allows $y$ to depend on $\varepsilon$.

%%%%%%%%%%%%%%%%%%%%%%%%%%%%%%%%%%%%%%%%%%%%%%%%%%%%%%%%%%%%%%%%%%%%%%
\section{Henkin-Continuity}
In view of the above discussion, we seek for an order on the four quantifiers
\[ \forall x\in\dom(f),\quad \exists y\in f(x), \quad \forall\varepsilon>0,
  \quad \exists\delta>0 \]
such that $y$ does not depend on $\varepsilon$ and $\delta$ does not depend on $x$.
This cannot be expressed in classical first-order logic and has spurred the
introduction of the non-classical so-called \textsf{Henkin Quantifier}
\cite{DependenceLogic}
\[ \QH(x,y,\varepsilon,\delta) \quad=\quad
\left(\begin{array}{cc} \forall x & \exists y \\ \forall\varepsilon & \exists\delta 
\end{array}\right) \]
where the suggestive writing indicates that very condition:
that $y$ may depend on $x$ but not on $\varepsilon$ 
while $\delta$ may depend on $\varepsilon$ but not on $x$.
We thus adopt from \cite[p.380]{Beeson}
the following\footnote{%
Its generalization from metric to uniform spaces is 
immediate but beyond our purpose.}

\begin{definition} \label{d:Henkin}
Call $f$ \textsf{Henkin-continuous} if the following holds:
\begin{equation} \label{e:Henkin}
\left(\begin{array}{cc}
\forall \varepsilon>0 & \exists \delta>0 \\[0.5ex]
\forall x\in\dom(f) \; & \exists y\in f(x)
\end{array}\right) \;\;
\forall x'\in\ball(x,\delta)\cap\dom(f) \quad
\exists y'\in\ball(y,\varepsilon)\cap f(x') \enspace . 
\end{equation}
\end{definition}
Observe that uniform strong continuity implies Henkin-continuity;
from which in turn follows both weak continuity and uniform weak continuity.
In fact, Henkin-continuity is strictly stronger than the latter two:

\begin{myexample}
\begin{enumerate}
\item[a)]
The relation $g$ from Examples~\ref{x:Examples}d) and \ref{x:VascoPeter94}a) is 
(compact and both weakly continuous and uniformly weakly continuous but)
not Henkin-continuous. \medskip
\item[b)] It does, however, satisfy 
\quad $\displaystyle 
\binom{\forall\varepsilon>0 \;\;\exists\delta>0}{\forall x,x' \; \exists y\in g(x)}
\;\exists y'\in g(x') \; \Big(x'\in\ball(x,\delta)\rightarrow y'\in\ball(y,\varepsilon)\Big)$.
\medskip
\item[c)]
The relations from Examples~\ref{x:Examples}b) and \ref{x:VascoPeter94}c)
are (computable and) Henkin-continuous.
\end{enumerate}
\end{myexample}
\begin{proof} 
\begin{enumerate}
\item[a)] by contradiction:
Suppose $y=y(x)$ satisfies Equation~(\ref{e:Henkin}).
Now let $\varepsilon:=1/2$ and consider $\delta:=\delta(\varepsilon)$
according to Equation~(\ref{e:Henkin}).
Then $y(x)=-1$ is impossible for all $0<x<\delta$,
as $x':=(x-\delta)/2<0$ implies $g(x')=\{0\}$
which is disjoint to $\ball(y,\varepsilon)$.
Now consider $\varepsilon':=\delta\cdot2/3$ and $\delta':=\delta(\varepsilon')$.
We claim that $y(x)=-1$ is necessary for all $x>\varepsilon'$,
this leading to a contradiction for $\delta\cdot2/3<x<\delta$.
Indeed, in case $y(x)=x$, rational 
$x'\in\ball\big(x,\min\{\delta',\delta/3\}\big)$
implies $g(x')=\{0\}$ which is disjoint to
$\ball(y,\varepsilon')$;
whereas in case $y(x)=0$, irrational 
$x'\in\ball\big(x,\min\{\delta',\delta/3\}\big)$
implies $g(x')=\{x'\}$ which is disjoint to
$\ball(y,\varepsilon')$.
\item[b)]
Let $\delta:=\varepsilon$ and take
$y:=-1$ in case $x,x'>0$; $y:=0$ in case $x\leq 0$;
and $\{y\}:=g(x)\cap[0,1]$ in case $x'\leq 0<x$.
\item[c)]
For $x\leq\tfrac{1}{2}$ choose $y:=0$
and for $x>\tfrac{1}{2}$ choose $y:=1$;
independently, choose $\delta:=\tfrac{1}{6}$.
\qed\end{enumerate}\end{proof}

%%%%%%%%%%%%%%%%%%%%%%%%%%%%%%%%%%%%%%%%%%%%%%%%%%%%%%%%%%%%%%%%%%%%%%
\subsection{Further Examples and Some Properties}
Recall that, for single-valued functions, 
Henkin-continuity coincides with uniform continuity. 

\begin{myexample} \label{x:Representations}
Recall from the Type-2 Theory of Effectivity (TTE) 
the Cauchy representation $\myrhoC$ 
\mycite{Definition~4.1.5}{Weihrauch}
and the signed digit representation $\rhosd$ 
\mycite{Definition~7.1.4}{Weihrauch} of real numbers.
\begin{enumerate}
\item[a)]
$\rhosd:\subseteq\Cantor\to\IR$ is not uniformly continuous
\item[b)]
nor is the restriction 
$\myrhoC|^{[0,1]}:\subseteq\Cantor\to[0,1]$;
cmp. \mycite{Example~7.2.3}{Weihrauch}.
\item[c)]
However for every compact $K\subseteq\IR$, the restriction 
$\rhosd|^{K}:\subseteq\Cantor\to K$ is uniformly
(i.e. Henkin-) continuous;
\item[d)]
and so are the restrictions $\myrhoC|_{_C}:C\to\IR$ 
and $\rhosd|_{_C}:C\to\IR$ for any compact $C\subseteq\Cantor$.
\item[e)]
$\displaystyle \myrhoC^{-1}:\IR\toto\Cantor, \quad
 \IR\ni x\mapsto \{\bar\sigma:\myrhoC(\bar\sigma)=x\}$,
the inverse of the Cauchy representation, is Henkin-continuous.
\item[f)]
Let $\langle\,\cdot\,,\,\cdot\,\rangle:\IN\times\IN\to\IN$ be an integer
pairing function with $\langle n,m\rangle\geq n+m$ for every $n,m\in\IN$.
Then the string pairing function $\{0,1\}^{\omega\times\omega}\to\Cantor$,
$(b_{\langle n,m\rangle})_{_{n,m\in\omega}}\mapsto (b_k)_{_{k\in\omega}}$
is 1-Lipschitz (and thus uniformly) continuous.
\end{enumerate}
\end{myexample}
\begin{proof}
\begin{enumerate}
\item[a)]
Consider some large integer $x=2^k\in\IN$ with $\rhosd$--name 
\texttt{10$\cdots$0.0$\cdots$}
(each digit \sdzero, \sdone, \sdminus, 
 and the point \sddot encoded as a constant-length
 string over $\{0,1\}^*$).
Then modifying this name $\bar\sigma$ at the $k$-th position 
affects the value $\rhosd(\bar\sigma)$ by an absolute value
of 1. In particular, to $\varepsilon:=1$, 
$\delta>0$ satisfying 
\[ d(\bar\sigma,\bar\tau)<\delta \quad\Rightarrow\quad
  d\big(\rhosd(\bar\sigma),\rhosd(\bar\tau)\big)<\varepsilon \]
must depend on the value of $x=2^k$, i.e. on $\bar\sigma$.
\item[b)]
Fix $k\in\IN$, 
and consider integers $a_n:=2^{k+n}$ and $b_n:=3\cdot 2^{k+n}$.
Hence the concatenation $\bar\sigma$ of binary-encoded numerators $a_n$ 
and denominators $b_n$ constitutes a $\myrhoC$--name of $x:=1/3$. 
Note that the secondmost-significant digit of $b_1$ resides 
roughly at position $\#k$ in $\bar\sigma$. Hence switching
to $a_n':=a_n$ and $b_n':=2\cdot 2^{k+n}$ 
yields $\bar\sigma'$ of metric
distance to $\bar\sigma$ of order $\delta=2^{-k}$; 
whereas the value 
$x'=\myrhoC(\bar\sigma')=1/2$ changes by $\varepsilon=1/6$.
\item[c)]
First consider the case $K=[0,1]$.
Then, modifying the $k$-th digit $b_k\in\{0,+1,-1\}$ 
of a signed digit expansion $\sum_{n=0}^{\infty} b_n 2^{-n}$
affects its value by no more than $2^{-k}$.
In the general case, let $2^{\ell}$ denote a bound on
$K$. Then, similarly, modifying the $k$-th position
of a signed digit expansion $\sum_{n=-N}^\infty b_n 2^{-n}$
affects its value by no more than $2^{\ell-k}$.
\item[d)]
Like any admissible representation, $\myrhoC$ and $\rhosd$ are continuous;
hence uniformly continuous on compact subsets.
\item[e)]
To $\varepsilon=2^{-k}>0$ let $\delta:=2^{-k}$.
Now consider arbitrary $x\in\IR$
and as $\myrhoC$--name $\bar\sigma$ 
the (binary encodings of numerators and denominators of the) 
dyadic sequence $q_n:=\lfloor x\cdot 2^{n+1}\rfloor/2^{n+1}$.
In fact it holds $|x-q_n|\leq2^{-n-1}\leq 2^{-n}$.
Now $x'\in\ball(x',\delta)$ has 
$|x'-q_n|\leq2^{-k}+2^{-n-1}\leq 2^{-n}$ 
for $n\leq k-1$. Therefore the first $k-1$ 
elements of $(q_n)$, and in particular the first
$k-1$ symbols of $\bar\sigma$, extend to a 
$\myrhoC$--name $\bar\tau$ of $x'$;
i.e. such that $d(\bar\sigma,\bar\tau)<\varepsilon$.
\item[f)]
Modifying the the argument at index $(n,m)$
affects the image at index $\langle n,m\rangle\geq n+m$,
i.e. the metric at weight $\leq2^{-(n+m)}$.
\qed\end{enumerate}\end{proof}
A classical property both of continuity and uniform continuity
is closure under restriction and under composition. 
Also Henkin-continuity passes these 
(appropriately generalized) sanity checks:

\begin{observation} \label{o:Composition}
\begin{enumerate}
\item[a)]
Let $f:\subseteq X\times Y$ be Henkin-continuous and 
tighten $g:\subseteq X\times Y$.
Then $g$ is Henkin-continuous, too.
\item[b)]
If $f:\subseteq X\times Y$ and $g:\subseteq Y\times Z$ are
Henkin-continuous, then so is $g\circ f\subseteq X\times Z$.
\end{enumerate}
\end{observation}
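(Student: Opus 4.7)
The plan is to verify both statements by unpacking the Henkin formula directly, treating the existentials inside the Henkin quantifier as Skolem/choice functions: the witness $y$ as a function $y(x)$ of $x$ alone, and $\delta$ as a function $\delta(\varepsilon)$ of $\varepsilon$ alone. Then (a) amounts to transplanting these functions across a tightening, and (b) to composing them.

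For part (a), I would exploit that $f$ tightening $g$ means $\dom(g)\subseteq\dom(f)$ and $f(x)\subseteq g(x)$ for every $x\in\dom(g)$. So given $\varepsilon>0$, I reuse the same $\delta$ provided by Henkin-continuity of $f$, and for each $x\in\dom(g)\subseteq\dom(f)$ I reuse the same witness $y_f(x)\in f(x)\subseteq g(x)$. Any $x'\in\ball(x,\delta)\cap\dom(g)$ also lies in $\ball(x,\delta)\cap\dom(f)$, so Henkin-continuity of $f$ supplies $y'\in\ball(y_f(x),\varepsilon)\cap f(x')\subseteq\ball(y_f(x),\varepsilon)\cap g(x')$, which is exactly the required witness for $g$.

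For part (b), I would chain the two choice systems. Given $\varepsilon>0$, first invoke Henkin-continuity of $g$ to get $\eta:=\delta_g(\varepsilon)$, and then Henkin-continuity of $f$ with $\eta$ in the role of the outer parameter to get $\delta:=\delta_f(\eta)$. For $x\in\dom(g\circ f)$, the defining clause of composition in (\ref{e:Composition}) guarantees $f(x)\subseteq\dom(g)$, so $y_f(x)\in f(x)\subseteq\dom(g)$ and the composite witness $z(x):=z_g(y_f(x))\in g(y_f(x))$ is well-defined and lies in $(g\circ f)(x)$. Now for $x'\in\ball(x,\delta)\cap\dom(g\circ f)$, Henkin-continuity of $f$ delivers some $y'\in\ball(y_f(x),\eta)\cap f(x')$; the condition $f(x')\subseteq\dom(g)$ places $y'$ in $\dom(g)$, so Henkin-continuity of $g$ then delivers $z'\in\ball(z_g(y_f(x)),\varepsilon)\cap g(y')\subseteq\ball(z(x),\varepsilon)\cap(g\circ f)(x')$.

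The only point that needs more than a routine check is verifying that the composed dependencies still fit the Henkin pattern. This is where the dimensional analysis has to be done carefully: the composite witness $z(x)=z_g(y_f(x))$ depends only on $x$, because neither $y_f$ nor $z_g$ depends on $\varepsilon$; and the composite radius $\delta_f(\delta_g(\varepsilon))$ depends only on $\varepsilon$, because neither $\delta_f$ nor $\delta_g$ depends on a point of the domain. The expected main obstacle, that an intermediate $y'$ might land outside $\dom(g)$ and break the second application, is precisely what the condition $f(x)\subseteq\dom(g)$ in (\ref{e:Composition}) rules out; so the composition clause of \emph{that} definition of $g\circ f$ is doing the real work here.
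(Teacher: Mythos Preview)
Your proof is correct and follows essentially the same approach as the paper: for (a) you reuse the witnesses and radii of $f$ after observing that tightening shrinks the universal ranges and enlarges the existential ones, and for (b) you compose the Skolem functions $\delta_f\circ\delta_g$ and $z_g\circ y_f$, checking the Henkin dependency pattern exactly as the paper does. Your explicit tracking of the dependencies and your remark that the clause $f(x)\subseteq\dom(g)$ in the definition of $g\circ f$ is what makes the intermediate $y'$ usable are precisely the points the paper's proof relies on.
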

\begin{proof}
\begin{enumerate}
\item[a)]
For $g$ loosening $f$
and in the definition of Henkin-continuity of $g$,
the universal quantifiers range over a subset,
and the existential quantifiers range over a superset,
of those in the definition of Henkin-continuity of $f$.
\item[b)]
By hypothesis, we have
\begin{gather}
\left(\begin{array}{cc}
\forall \varepsilon>0 & \exists \delta>0 \\[0.5ex]
\forall y\in\dom(g) \; & \exists z\in g(y)
\end{array}\right) \;\; 
\forall y'\in\ball(y,\delta)\cap\dom(g) \quad
\exists z'\in\ball(z,\varepsilon)\cap g(y') 
\label{e:temp1}
\\[1ex]
\left(\begin{array}{cc}
\forall \delta>0 & \exists \gamma>0 \\[0.5ex]
\forall x\in\dom(f) \; & \exists y\in f(x)
\end{array}\right) \;\; 
\forall x'\in\ball(x,\gamma)\cap\dom(f) \quad
\exists y'\in\ball(y,\delta)\cap f(x')
\label{e:temp2}
\end{gather}
Thus, to $\varepsilon>0$, take $\delta>0$ 
according to Equation~(\ref{e:temp1})
and in turn $\gamma>0$ 
according to Equation~(\ref{e:temp2}).
Similarly, to 
$x\in\dom(g\circ f)\subseteq\dom(f)$,
take $y\in f(x)\subseteq\dom(g)$ 
according to Equations~(\ref{e:temp2})
and (\ref{e:Composition});
and in turn $z\in g(y)$
according to Equation~(\ref{e:temp1}).
This $z$ thus belongs to $\big(g\circ f\big)(x)$ 
and was obtained independently of $\varepsilon$,
nor does $\gamma$ depend on $x$.
Moreover
to $x'\in\ball(x,\gamma)\cap\dom(g\circ f)$
there is a $y'\in\ball(y,\delta)\cap f(x')
\subseteq\ball(y,\delta)\cap\dom(g)$;
to which in turn there is a 
$z'\in\ball(z,\varepsilon)\cap g(y')$,
i.e. $z'\in\ball(z,\varepsilon)\cap \big(g\circ f\big)(x')$.
\qed\end{enumerate}\end{proof}
The following further example in Item~b) turns out as rather useful:

\begin{proposition} \label{p:SignedDigit}
\begin{enumerate}
\item[a)]
Every $x\in\IR$ has a signed digit expansion
\begin{equation} \label{e:SignedDigit}
x\;\;=\;\;\sum\nolimits_{n=-N}^\infty a_n 2^{-n},
\qquad a_n\in\{\sdzero,\sdone,\sdminus\}
\end{equation}
with \emph{no} consecutive digit pair $\sdone\sdone$ nor $\sdminus\sdminus$
nor $\sdone\sdminus$ nor $\sdminus\sdone$. 
\item[b)]
For $k\in\IN$,
each $|x|\leq\tfrac{2}{3}\cdot2^{-k}$ 
admits such an expansion with $a_n=0$ for all $n\leq k$.
And, conversely, 
$x=\sum_{n=k+1}^\infty a_n 2^{-n}$ with 
$(a_n,a_{n+1})\in\{\sdone\sdzero,\sdminus\sdzero,\sdzero\sdone,\sdzero\sdminus,\sdzero\sdzero\}$
for every $n$ requires $|x|\leq\tfrac{2}{3}\cdot2^{-k}$.
\item[c)]
Let $x=\sum_{n=-N}^\infty a_n 2^{-n}$ be a signed digit expansion
and $k\in\IN$ such that 
$(a_n,a_{n+1})\in\{\sdone\sdzero,\sdminus\sdzero,\sdzero\sdone,\sdzero\sdminus,\sdzero\sdzero\}$
for each $n>k$.
Then every $x'\in[x-2^{-k}/3,x+2^{-k}/3]$ admits a signed digit expansions
$x'=\sum_{n=-N}^\infty b_n 2^{-n}$ with $a_n=b_n\forall n\leq k$.
\item[d)]
Let $\Sigma:=\{\sdzero,\sdone,\sdminus,\sddot\}$.
The inverse $\rhosd^{-1}:\IR\toto\Sigma^\omega$ 
of the signed digit representation
is Henkin-continuous. 
\end{enumerate}
\end{proposition}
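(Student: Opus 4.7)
I would prove the four parts in order, each building on its predecessors.

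For \textbf{(a)}, I would run a greedy \emph{non-adjacent form} (NAF) algorithm: choose $N$ so that $|x| \le \tfrac{2}{3}\cdot 2^N$, and generate digits $a_{-N}, a_{-N+1}, \ldots$ while maintaining the invariant that the scaled residue $R_n := 2^n\bigl(x - \sum_{m \le n} a_m 2^{-m}\bigr)$ lies in $[-\tfrac{2}{3},\tfrac{2}{3}]$ after a zero digit and in $[-\tfrac{1}{3},\tfrac{1}{3}]$ after a nonzero one. If the previous digit was nonzero, we are forced to output $\sdzero$ (and $R_n$'s interval doubles back into $[-\tfrac{2}{3},\tfrac{2}{3}]$); otherwise we split $[-\tfrac{2}{3},\tfrac{2}{3}]$ into three thirds and output $\sdminus$, $\sdzero$, or $\sdone$ accordingly. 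A short case analysis verifies that the invariant propagates and that no forbidden consecutive pair arises.

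For \textbf{(b)}, the extremal NAF tail pattern $\sdone\sdzero\sdone\sdzero\cdots$ starting from position $k{+}1$ sums to $2^{-k-1}\sum_{j=0}^\infty 4^{-j} = \tfrac{2}{3}\cdot 2^{-k}$, yielding the upper bound in the converse direction; and the first half follows by applying (a) to the rescaled number $2^k x$. For \textbf{(c)}, set $y := \sum_{n=-N}^{k} a_n 2^{-n}$: by (b) applied to the tail from position $k{+}1$ (which is NAF by hypothesis), $|x-y| \le \tfrac{2}{3}\cdot 2^{-k}$, whence $|x' - y| \le \tfrac{1}{3}\cdot 2^{-k} + \tfrac{2}{3}\cdot 2^{-k} = 2^{-k}$. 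Any value in $[-2^{-k}, 2^{-k}]$ is representable as a signed binary series $\sum_{n>k} b_n 2^{-n}$ with $b_n \in \{-1,0,1\}$ (standard signed-digit representation shifted by $k$), and appending such $b_{k+1}, b_{k+2}, \ldots$ to the initial segment $a_{-N},\ldots,a_k$ produces the desired expansion of $x'$.

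For \textbf{(d)}, for each $x \in \IR$ I would fix a NAF expansion $\bar\sigma_x$ once and for all using (a); this choice is manifestly independent of $\varepsilon$, as the Henkin quantifier demands. Given $\varepsilon > 0$, pick $k \in \IN$ with $2^{-(k+2)} \le \varepsilon$ and set $\delta := 2^{-k}/3$, uniformly in $x$. For arbitrary $x$ and any $x' \in B(x,\delta)$, invoke (c) to produce an expansion $\bar\sigma_{x'}$ of $x'$ agreeing with $\bar\sigma_x$ on every position $n \le k$; crucially, the expansion of $x'$ furnished by (c) shares the same starting index $-N$ (and hence the same integer-part length and point position) as $\bar\sigma_x$, so the two strings in $\Sigma^\omega$ coincide on at least $N + 2 + k \ge k + 2$ initial symbols, placing them at distance at most $2^{-(k+2)} \le \varepsilon$. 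The main obstacle is precisely this uniform-in-$x$ choice of $\delta$: NAFs of nearby reals can genuinely have different integer-part lengths (for example near $x = 2^N$), which would spoil Cantor-closeness if we required $\bar\sigma_{x'}$ itself to be NAF; but (c) circumvents this by allowing $\bar\sigma_{x'}$ to be non-NAF on its tail while forcing it to match $\bar\sigma_x$ on the entire integer part already chosen.
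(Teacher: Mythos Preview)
Your proposal is correct. Parts (b), (c), and (d) follow essentially the same line as the paper (with your (c) in fact slightly cleaner about the sign of the tail $x - x''$). For part (a), however, you take a genuinely different route: you build the non-adjacent form directly by a greedy digit-generation algorithm, maintaining a residue invariant that forces a $\sdzero$ after every nonzero digit. The paper instead starts from an \emph{arbitrary} signed-digit expansion of $x$ and normalizes it via local rewriting rules ($\sdzero\sdone\sdone \to \sdone\sdzero\sdminus$, $\sdzero\sdminus\sdminus \to \sdminus\sdzero\sdone$, $\sdzero\sdone\sdminus \to \sdzero\sdzero\sdone$, $\sdzero\sdminus\sdone \to \sdzero\sdzero\sdminus$), run in a two-level loop because a single application can reintroduce a forbidden pair one position to the left. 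Your greedy construction is more self-contained and sidesteps the convergence argument the paper needs for its backtracking inner loop; the paper's rewriting approach, on the other hand, has the mild conceptual advantage of transforming any given expansion rather than producing a canonical one from scratch.
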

\begin{proof}
\begin{enumerate}
\item[a)]
Start with an arbitrary signed digit expansion $(a_n)$ of $x$
and replace, starting from the most significant digits,
\begin{enumerate} \itemsep0pt%
\item[i)] any occurrence of $\sdzero\sdone\sdone$ with $\sdone\sdzero\sdminus$,
\item[ii)] any occurrence of $\sdzero\sdminus\sdminus$ with $\sdminus\sdzero\sdone$,
\item[iii)] any occurrence of $\sdzero\sdone\sdminus$ with $\sdzero\sdzero\sdone$,
\item[iv)] any occurrence of $\sdzero\sdminus\sdone$ with $\sdzero\sdzero\sdminus$.
\end{enumerate}
Note that these substitutions do not affect the value $\sum_{n=-N}^\infty a_n2^{-n}$.
Moreover the above four cases are the only possible involving one of
$\sdone\sdone$ or $\sdminus\sdminus$ or $\sdone\sdminus$ or $\sdminus\sdone$ 
because, by induction hypothesis and proceeding from left (most significant) to right,
no such combination was left \emph{before} of the current position.
On the other hand, rewriting Rule~i) may well introduce a new occurrence of
$\sdone\sdone$ before the current position; this is illustrated in the example
of $\sdzero\sdone\sdzero\sdone\sdzero\sdone\sdone$. Similarly 
for $\sdminus\sdminus$ in Rule~ii). 
Therefore, we apply the rules in two loops: 
\begin{itemize}\itemsep0pt
\item An infinite outer one for $n=-N,\ldots,0,1,2,\ldots$, \\
maintaining that neither $\sdone\sdone$ nor $\sdminus\sdminus$ nor
$\sdone\sdminus$ nor $\sdminus\sdone$ occurs before position $n$
\item one application of rules i) to iv) to remove a
possible occurrence at position $n$
\item followed by a finite inner loop for $j$ running from $n$ back to $-N$,
iteratively removing occurrences which may have been newly introduced
at position $j$.
\end{itemize}
Observe that, after each termination of the inner loop,
no occurrence remains before or at position $n$.
Hence the process converges and yields an equivalent signed digit
expansion with the desired property.
\item[b)]
Shifting/scaling reduces to the case $k=0$;
and negation to the case $x>0$.
\\
$\tfrac{2}{3}=\sdzero.\sdone\sdzero\sdone\sdzero\ldots$ is 
an expansion with the claimed properties. 
So turn to $0<x<\tfrac{2}{3}$ and, indirectly, w.l.o.g. suppose $a_0=\sdone$. 
Extend this to a signed digit expansion of least value $\sum_{n=0}^\infty a_n2^{-n}=x$
with no consecutive $\sdone\sdone,\sdminus\sdminus,\sdone\sdminus,\sdminus\sdone$.
Due to monotonicity,
this is attained by including digit $\sdminus$ whenever admissible, namely
$\sdone.\sdzero\sdminus\sdzero\sdminus\ldots$
of value $x=\tfrac{2}{3}$: a contradiction.
\\
For the converse, similarly observe that $\sdzero.\sdone\sdzero\sdone\sdzero\ldots$
has the largest value among all signed digit expansions with the claimed properties;
and its value is $\tfrac{2}{3}$.
\item[c)]
Let $x'':=\sum_{n=-N}^k a_n 2^{-n}$ and observe
that $x-x''=\sum_{n=k+1}^\infty a_n 2^{-n}$ is by hypothesis
a signed digit expansion satisfying 
$(a_n,a_{n+1})\in\{\sdone\sdzero,\sdminus\sdzero,\sdzero\sdone,\sdzero\sdminus,\sdzero\sdzero\}$
for all $n\geq k+1$, hence $0\leq x-x''\leq\tfrac{2}{3}\cdot2^{-k}$ by b).
In addition with the hypothesis $|x-x'|\leq2^{-k}/3$,
we conclude that $x'-x''=(x'-x)+(x-x'')\in[-\tfrac{1}{3}\cdot2^{-k},2^{-k}]$ 
admits a signed digit expansion (possibly using combinations like $\sdone\sdone$)
$x'-x''=\sum_{n=k+1}^\infty b_n2^{-n}$.
Thus $x'=(x'-x'')+x''=\sum_{n=-N}^k a_n 2^{-n}+\sum_{n=k+1}^\infty b_n2^{-n}$
is an expansion with the claimed properties.
\item[d)]
To $2^{-k}\geq\varepsilon>0$ let $\delta:=\tfrac{2}{3}\varepsilon$.
To $x\in\IR$ let $\bar\sigma$ be a $\rhosd$--name $\bar\sigma$ 
\mycite{Definition~7.2.4}{Weihrauch} encoding the signed digit expansion
$(a_n)$ of $x$ according to a).
Due to c), every 
$x'\in\cball(x,\delta)\subseteq\cball(x,\cdot2^{-(k-1)}/3)$ 
admits a signed digit expansion $(b_n)$ coinciding with 
$(a_n)$ for all $n\leq k-1$. 
Since every $\rhosd$--name includes the binary separator symbol, 
an appropriate name $\bar\sigma'$ encoding $(b_n)$ agrees with $\bar\sigma$
for at least the first $k+1$ symbols, i.e. has distance at most
$2^{-k}\leq\varepsilon$.
\qed\end{enumerate}\end{proof}

%%%%%%%%%%%%%%%%%%%%%%%%%%%%%%%%%%%%%%%%%%%%%%%%%%
\subsection{Other Characterizations and Tools}
Let us call a mapping $\lambda:\IN\to\IN$ a \textsf{modulus};
and say that a multifunction $f:\subseteq X\toto Y$ 
is \textsf{$\lambda$-continuous in} $(x,y)\in f$ if,
to every $m\in\IN$ and every $x'\in\dom(f)\cap\cball(x,2^{-\lambda(m)})$
there exists some $y'\in f(x')\cap\cball(y,2^{-m})$.
Here, $\cball(x,r):=\{x'\in X:d(x,x')\leq r\}$ denotes
the closed ball of radius $r$ around $x$.
Now Skolemization of ``$\forall\varepsilon>0\exists\delta>0$'' yields

\begin{observation} \label{o:Modulus}
A multifunction $f:\subseteq X\toto Y$ is Henkin-continuous
~iff~ there exists a modulus $\lambda$ such that,
for every $x\in\dom(f)$, there exists $y\in f(x)$
such that $f$ is $\lambda$-continuous in $(x,y)$; \\
equivalently: if, for every $x\in\dom(f)$, 
$f$ admits some single-valued total 
selection $f_x:X\to Y$ $\lambda$-continuous in $\big(x,f_x(x)\big)$
(but possibly not continuous anywhere else, see Example~\ref{x:noLocSel} below).
\end{observation}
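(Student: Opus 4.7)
The plan is to prove both equivalences by routine Skolemization of the Henkin quantifier in Definition~\ref{d:Henkin}.

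For the equivalence with the modulus formulation, the Henkin prefix semantically asserts the existence of two independent Skolem functions $\delta:(0,\infty)\to(0,\infty)$ and $y:\dom(f)\to Y$ with $y(x)\in f(x)$, such that every $x'\in\ball(x,\delta(\varepsilon))\cap\dom(f)$ admits some $y'\in\ball(y(x),\varepsilon)\cap f(x')$. Discretising at dyadic rates $\varepsilon=2^{-m}$ and choosing $\lambda(m)\in\IN$ with $2^{-\lambda(m)}<\delta(2^{-m-1})$ yields $\lambda$-continuity of $f$ in $(x,y(x))$, the single-exponent offset absorbing the passage from open to closed balls. The converse direction is the obvious inversion, defining $\delta(\varepsilon):=2^{-\lambda(m)}$ for the least $m$ with $2^{-m}<\varepsilon$.

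For the equivalence with selections, ``$\Leftarrow$'' is immediate: setting $y(x):=f_x(x)\in f(x)$, $\lambda$-continuity of the single-valued $f_x$ at $x$ yields, for every $x'\in\cball(x,2^{-\lambda(m)})\cap\dom(f)$, the witness $f_x(x')\in\cball(y(x),2^{-m})\cap f(x')$, the selection property placing it in $f(x')$. For ``$\Rightarrow$'', replace $\lambda$ WLOG by $\tilde\lambda(m):=\max\{\lambda(0),\ldots,\lambda(m),m\}$, which is strictly increasing, is still a modulus for $f$, and satisfies $2^{-\tilde\lambda(m)}\to 0$. Then for fixed $x\in\dom(f)$ with Skolem witness $y(x)$, define $f_x:X\to Y$ pointwise by the axiom of choice: on $\dom(f)\setminus\{x\}$, if $m^*:=\max\{m\in\IN:d(x,x')\leq 2^{-\tilde\lambda(m)}\}$ exists, pick $f_x(x')\in f(x')\cap\cball(y(x),2^{-m^*})$ (nonempty by $\tilde\lambda$-continuity); otherwise pick $f_x(x')\in f(x')$ arbitrarily. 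Set $f_x(x):=y(x)$ and extend by the constant $y(x)$ on $X\setminus\dom(f)$ so that $\tilde\lambda$-continuity of $f_x$ at $x$ is preserved there as well.

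I do not anticipate any serious obstacle; the entire argument amounts to bookkeeping around the semantics of the Henkin quantifier. The only minor subtleties are the open/closed-ball conversion via integer-valued moduli, the monotonisation of $\lambda$ to guarantee existence of the maximum $m^*$ (so that the chosen value of $f_x(x')$ actually witnesses $\tilde\lambda$-continuity at the appropriate level $m^*$), and the trivial constant extension of $f_x$ to $X\setminus\dom(f)$.
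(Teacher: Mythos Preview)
Your proposal is correct and follows exactly the approach the paper indicates: the paper does not spell out a proof at all but merely notes that the observation is obtained by ``Skolemization of `$\forall\varepsilon>0\;\exists\delta>0$'\,'', which is precisely what you carry out in detail. One cosmetic point: your $\tilde\lambda(m):=\max\{\lambda(0),\ldots,\lambda(m),m\}$ need not be \emph{strictly} increasing as you claim, only non-decreasing; but since you correctly observe $2^{-\tilde\lambda(m)}\to 0$, the maximum $m^*$ still exists for $x'\neq x$ and the argument goes through unchanged.
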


\begin{definition}
\begin{enumerate}
\item[a)] For $L>0$, a multifunction $f:\subseteq X\toto Y$ is
\textsf{$L$-Lipschitz} if
\begin{equation} \label{e:Lipschitz}
\forall x\in\dom(f) \;\;
\exists y\in f(x) \;\;
\forall x'\in\dom(f) \;\;
\exists y'\in\ball\big(y,L\cdot d(x,x')\big)\cap f(x') \enspace .
\end{equation}
\item[b)]
Call a family $f_i:\subseteq X_i\toto Y_i$ ($i\in I$) of multifunctions
\textsf{equicontinuous} if they share a common modulus
in the sense that the following holds:
\begin{equation} \label{e:Equi}
\binom{
\forall \varepsilon>0 \;\; \exists \delta>0}{
\forall i\in I 
\;\forall x\in\dom(f_i) \; \exists y\in f_i(x)} \;\;
\forall x'\in\ball(x,\delta) \quad
\exists y'\in\ball(y,\varepsilon)\cap f(x') \enspace .
\end{equation}
\end{enumerate}
\end{definition}
So every Lipschitz relation is Henkin-continuous;
and every family of total $L$-Lipschitz relations
is equicontinuous. 
The proof of Proposition~\ref{p:SignedDigit}d)
reveals Item~a) of the following

\begin{myexample} \label{x:noLocSel}
\begin{enumerate}
\item[a)]
For $\Sigma=\{\sdzero,\sdone,\sdminus,\sddot\}$,
the inverse $\rhosd^{-1}:\IR\toto\Sigma^\omega$ 
of the signed digit representation\footnote{Note that
proceeding from alphabet $\Sigma$ to $\{0,1\}^2$ 
affects the Lipschitz constant by a factor of 2.},
is $\tfrac{3}{2}$-Lipschitz.
\item[b)]
The relation 
\[ f \quad :=\quad \big\{(0,0)\big\}\;\cup\; \bigcup\nolimits_{k\in\IN}
  \big[2^{-k},\max\{1,3\cdot2^{-k}\}\big]\times\big\{2^{-k}\big\} 
  \quad\subseteq\quad [0,1]\times[0,1] \enspace . \]
depicted in Figure~\ref{f:noLocSel} is compact and 1-Lipschitz.
Moreover, $f$ is computable but has no locally continuous selection in $x_0=0$.
\end{enumerate}
\end{myexample}
\begin{figure}
\centerline{\includegraphics[width=0.7\textwidth]{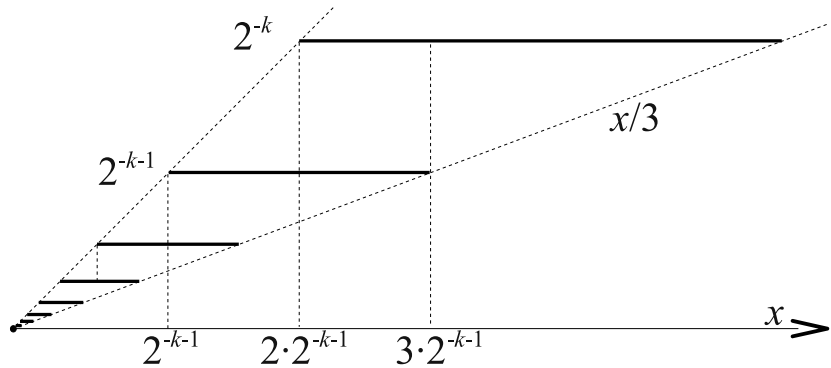}}
\caption{\label{f:noLocSel}Computable compact relation
with no locally continuous selection in $x_0=0$.}
\end{figure}
Concerning Example~\ref{x:noLocSel}b), 
the ratio $\min\{|y-y'|:y\in f(x),y'\in f(x')\}|/|x-x'|$ becomes 
worst for $x=2\cdot 2^{-k-1}-\varepsilon$ (hence $f(x)=\{2^{-k-1}\}$, i.e.
$y=2^{-k-1}$) and $x'=3\cdot2^{-k-1}+\varepsilon$
(hence $f(x')=\{2^{-k}\}$, i.e. $y=2^{-k}$).
Moreover every $(x,y)\in f$ satisfies $x/3\leq y\leq x$.
Thus the following algorithm computes $f$:
Given $x\in[0,1]$ in form of a nested sequence $[a_n,b_n]$
of intervals with rational endpoints $b_n-a_n\leq2^{-n-1}$,
test whether $[a_n,b_n]\subseteq [2^{-n},3\cdot2^{-n}]$ holds:
if not, output $[a_n/3,b_n]$ and proceed to interval $\#n+1$, 
otherwise switch to outputting the constant sequence $[2^{-n},2^{-n}]$.
Note that for $x=0$, the output sequence $[a_n/3,b_n]$ will indeed
converge to $y=0$. In case $3\cdot2^{-k-1}<x\leq2\cdot2^{-k}$ on
the other hand, $[a_k,b_k]\subseteq [2^{-k},3\cdot2^{-k}]$ holds
and will result in the output of $y=2^{-k}\in f(x)$, compliant 
with possible previous intervals $[a_n/3,b_n]\supseteq [x/3,x]\supseteq f(x)$.
In the final case $2\cdot2^{-k-1}<x\leq3\cdot2^{-k-1}$,
at least one of $[a_k,b_k]\subseteq [2^{-k},3\cdot2^{-k}]$ 
and $[a_k,b_k]\subseteq [2^{-k-1},3\cdot2^{-k-1}]$ holds;
hence the algorithm will produce $2^{-n}$ either for 
$n=k$ or for $n=k+1$.\qed

\begin{proposition} \label{p:Arno}
\begin{enumerate}
\item[a)]
$I$ denote an ordinal and 
$f_i:\subseteq X\toto Y$ ($i\in I$) an equicontinuous 
family of pointwise compact multifunctions and decreasing
in the sense that $f_j$ tightens $f_i$  whenever $j>i$.
Then $f(x):=\bigcap_{i:f_i(x)\neq\emptyset} f_i(x)$ 
is again pointwise compact and Henkin-continuous
a tightening of each $f_i$. \\
Moreover, if all $f_i$ are $\lambda$-continuous,
then so is $f$.
\item[b)]
Let $f:X\toto Y$ be $\lambda$-continuous
and pointwise compact for some modulus $\lambda$. 
Then $f$ has a 
minimal $\lambda$-continuous pointwise compact tightening.
\end{enumerate}
\end{proposition}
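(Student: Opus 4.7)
Fix $x$ and set $I_x := \{i \in I : x \in \dom(f_i)\}$; since tightening enlarges the domain, $I_x$ is upward-closed in $I$, and $\{f_i(x)\}_{i \in I_x}$ is a decreasing chain of nonempty compact subsets of $Y$. By the finite intersection property for compact sets, $f(x) = \bigcap_{i \in I_x} f_i(x)$ is nonempty and compact, so $f$ is pointwise compact; and $f$ tightens each $f_j$ by construction (the values shrink, and if $x \in \dom(f_j)$ then $I_x \ni j$, so $f(x) \subseteq f_j(x)$).

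For Henkin-continuity (which subsumes the ``moreover'' $\lambda$-continuity claim), extract from equicontinuity via Observation~\ref{o:Modulus} a common modulus $\lambda$ such that, for every pair $(i,x)$ with $x \in \dom(f_i)$, some $y \in f_i(x)$ makes $f_i$ $\lambda$-continuous in $(x,y)$. Define the witness set
\[
T_i(x) \;:=\; \{\, y \in f_i(x) : f_i \text{ is } \lambda\text{-continuous in } (x,y)\,\},
\]
a closed (hence compact) nonempty subset of $f_i(x)$. The key monotonicity step is $T_j(x) \subseteq T_i(x)$ for $i \leq j$ in $I_x$: if $y \in T_j(x)$ and $x' \in \cball(x, 2^{-\lambda(m)}) \cap \dom(f_i)$, then also $x' \in \dom(f_j)$ by domain enlargement, so the witness $y' \in f_j(x') \cap \cball(y, 2^{-m})$ lies in $f_i(x')$ by value shrinking. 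By FIP for compacts, $T(x) := \bigcap_{i \in I_x} T_i(x)$ is nonempty; pick any $y \in T(x)$. For $m \in \IN$ and $x' \in \dom(f) \cap \cball(x, 2^{-\lambda(m)})$, take $i \in I_x$ large enough that $x' \in \dom(f_i)$; then $\{\cball(y, 2^{-m}) \cap f_j(x')\}_{j \in I_x,\, j \geq i}$ is a decreasing family of nonempty compacts whose intersection $\cball(y, 2^{-m}) \cap f(x')$ is therefore nonempty. This delivers $\lambda$-continuity of $f$ in $(x,y)$, hence Henkin-continuity of $f$ by Observation~\ref{o:Modulus}.

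\textbf{Part (b).} Apply Zorn's Lemma to the poset $\mathcal{P}$ of $\lambda$-continuous pointwise compact tightenings of $f$, ordered so that $g_1 \preceq g_2$ iff $g_1$ tightens $g_2$; minimal elements are the desired objects. Any chain $\mathcal{C} \subseteq \mathcal{P}$ is equicontinuous (all members share the single modulus $\lambda$) and totally ordered by tightening, so its pointwise intersection $g(x) := \bigcap_{h \in \mathcal{C}} h(x)$ falls under the hypotheses of (a)---the ordinal indexing there is cosmetic, the argument relying only on FIP of compacts and the chain property, not on transfinite induction. Hence $g$ is a $\lambda$-continuous pointwise compact tightening of each $h \in \mathcal{C}$, and by transitivity of tightening, of $f$. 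Thus $g \in \mathcal{P}$ is a lower bound of $\mathcal{C}$, and Zorn yields a minimal element.

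The main obstacle I anticipate is the monotonicity step $T_j(x) \subseteq T_i(x)$: it genuinely uses \emph{both} facets of the tightening relation---domain enlargement of $f_j$ over $f_i$, to ensure the witness $x'$ for $\lambda$-continuity of $f_i$ is also admissible for $f_j$, and value shrinking, to place the resulting $y'$ back inside $f_i(x')$. This is precisely where the decreasing hypothesis of (a) earns its keep, and clarifies why (a) is stated for decreasing families rather than for arbitrary equicontinuous ones.
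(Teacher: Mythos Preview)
Your proof is correct and in fact cleaner than the paper's. The paper argues part~(a) sequentially: it reduces to $I=\IN$ (leaving the general ordinal to a vaguely invoked transfinite induction), picks the equicontinuity witnesses $y_j\in f_j(x)$, extracts an accumulation point $y\in f(x)$ by compactness, and then for each $x'$ runs a second diagonal/subsequence argument to land $y'\in f(x')\cap\cball(y,\varepsilon)$. Your approach replaces all of this with the finite intersection property applied to the closed witness sets $T_i(x)$, which has two advantages: it works uniformly for any totally ordered index set (so your remark in~(b) that the ordinal indexing is cosmetic is justified by \emph{your} proof of~(a), though not by the paper's), and it makes the role of the tightening hypothesis completely transparent via the monotonicity $T_j(x)\subseteq T_i(x)$. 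The closedness of $T_i(x)$, which you assert in passing, does hold: for fixed $m$ and $x'$ the condition $f_i(x')\cap\cball(y,2^{-m})\neq\emptyset$ reads $d(y,f_i(x'))\leq 2^{-m}$, a closed condition in $y$ since $f_i(x')$ is compact. For part~(b) both you and the paper use Zorn; the paper phrases it as ``take a maximal chain, form its intersection, and show minimality'', while you give the standard ``every chain has a lower bound'' formulation, which is equivalent and slightly more direct.
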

\begin{proof}
\begin{enumerate}
\item[a)]
Since the case of a finite $I$ is trivial,
it suffices to treat the case $I=\IN$ of a sequence;
the general case then follows by transfinite induction.
Let $x\in\dom(f_i)$. Then $f_j(x)\subseteq f_i(x)$ for
each $j>i$, and hence $f(x)=\bigcap_{j\geq i} f_j(x)\subseteq f_i(x)$ 
is (compact and) the intersection of non-empty compact 
decreasing sets: $f(x)\neq\emptyset$, $x\in\dom(f)$.
Moreover let $\varepsilon>0$ be arbitrary
and consider an appropriate $\delta$
according to Equation~(\ref{e:Equi}) 
independent of $x$; similarly take $y_j\in f_j(x)$ 
independent of $\varepsilon$ as asserted by equicontinuity.
Then the sequence $(y_j)_{j>i}$ belongs to compact $f_j(x)$
and thus has some accumulation point $y\in f_j(x)\subseteq f_i(x)$ 
for each $j$: thus yields $y\in f(x)$ independent of $\varepsilon$.
W.l.o.g $y_j\to y$ by proceeding to a subsequence.
Now let $d(x,x')\leq\delta$. Then
by hypothesis there exists $y_j'\in f_j(x')$ with
$d(y_j,y_j')\leq\varepsilon$;
and, again, an appropriate 
subsequence of $(y_j')$ converges to some $y'\in f(x')$.
Moreover, $d(y,y')\leq d(y,y_j)+d(y_j,y_j')+d(y_j',y')
\leq d(y,y_j)+\varepsilon+d(y',y_j')\to\varepsilon$.
\item[b)]
Consider the family $\calF$ of all 
$\lambda$-continuous and pointwise compact tightenings of $f$.
According to a), these form a
\emph{directed complete partial order} (\textsf{dcpo}) 
with respect to total restriction. More explicitly,
apply Zorn's Lemma to get a maximal chain $(f_i)$, $i\in I$.
Then a) asserts that $g(x):=\bigcap_{i:f_i(x)\neq\emptyset} f_i(x)$ 
defines a $\lambda$-continuous and pointwise compact tightening
of $f$. In fact a minimal one: 
If $h\in\calF$ tightens $g$, 
then $h=f_j$ for some $j\in I$ because of the maximality
of $(f_i)_{_{i\in I}}$; hence $g$ tightens $f_j$.
\qed\end{enumerate}\end{proof} 

%%%%%%%%%%%%%%%%%%%%%%%%%%%%%%%%%%%%%%%%%%%%%%%%%%
\subsection{Relative Computability requires Henkin-Continuity}
With the above examples and tools,
it is now easy to establish

\begin{theorem} \label{t:Henkin}
Let $K\subseteq\IR$ be compact.
\begin{enumerate}
\item[a)]
If $f:K\toto\IR$ is computable relative to some oracle,
then it is Henkin-continuous.
\item[b)]
More precisely suppose $F:\subseteq\Cantor\toto\Cantor$
is a Henkin-continuous 
$(\rhosd,\rhosd)$--\textsf{multi}realizer of $f:K\toto\IR$
(recall Lemma~\ref{l:Tighten})
which maps compact sets to compact sets.
Then $f$ itself must be Henkin-continuous, too;
and has a Henkin-continuous tightening 
$g:K\toto\IR$ mapping compact sets to compact sets.
\item[c)]
Conversely, if $f:K\toto\IR$ is Henkin-continuous
and maps compact sets to compact sets,
then $F:=\rhosd^{-1}\circ f\circ\rhosd|^K$
is a Henkin-continuous 
$(\rhosd,\rhosd)$--multirealizer of $f$
which maps compact sets to compact sets.
\end{enumerate}
\end{theorem}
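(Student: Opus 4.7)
The plan is to prove (c) first, then (b), and finally to derive (a) from (b). The central tool throughout is the closure of Henkin-continuity under composition (Observation~\ref{o:Composition}b) and under taking loosenings (Observation~\ref{o:Composition}a), combined with the analogous closure properties for ``maps compact sets to compact sets'' supplied by Lemma~\ref{l:Tighten}d) and Example~\ref{x:ClosedMap}a,b).

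For (c), I would factor $F=\rhosd^{-1}\circ f\circ\rhosd|^K$ and verify both properties factor by factor. Since $K\subseteq\IR$ is compact and $\rhosd$ is proper, the domain $\rhosd^{-1}[K]$ of the restriction $\rhosd|^K$ is compact in $\Cantor$; thus the continuous single-valued $\rhosd|^K$ is uniformly, hence Henkin, continuous (Example~\ref{x:Representations}d) and maps compact to compact (Example~\ref{x:ClosedMap}a). The middle factor $f$ satisfies both properties by hypothesis, and the outer factor $\rhosd^{-1}$ is Henkin-continuous by Proposition~\ref{p:SignedDigit}d) and maps compact sets to compact sets by Example~\ref{x:ClosedMap}b). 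Observation~\ref{o:Composition}b) together with Lemma~\ref{l:Tighten}d) then deliver the conclusion for $F$.

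For (b), given a Henkin-continuous multirealizer $F$ of $f$ that maps compact sets to compact sets, I set $g:=\rhosd\circ F\circ\rhosd^{-1}|_K$. By Lemma~\ref{l:Tighten}e,f), $g$ tightens $f$. To establish Henkin-continuity of $g$, I decompose again: $\rhosd^{-1}|_K$ is Henkin-continuous by Proposition~\ref{p:SignedDigit}d); $F$ is Henkin-continuous by hypothesis; and the outer $\rhosd$ is applied only on $F\bigl[\rhosd^{-1}[K]\bigr]$, which is compact by properness of $\rhosd$ combined with the ``maps compact to compact'' hypothesis on $F$, so $\rhosd$ is uniformly Henkin-continuous there by Example~\ref{x:Representations}d). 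Observation~\ref{o:Composition}b) then gives Henkin-continuity of $g$; and since $g$ tightens $f$, Observation~\ref{o:Composition}a) transfers Henkin-continuity back to $f$ itself. The compact-to-compact property for $g$ is obtained by the same decomposition via Lemma~\ref{l:Tighten}d) and Example~\ref{x:ClosedMap}a,b).

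Part (a) then follows from (b). If $f$ is computable relative to some oracle, there exists a single-valued partial oracle-computable realizer $F:\subseteq\Cantor\to\Cantor$ whose domain contains the compact set $\rhosd^{-1}[K]$. An oracle-computable function is continuous on its domain, and a continuous single-valued function on a compact set is uniformly---hence Henkin---continuous and maps compact sets to compact sets (Example~\ref{x:ClosedMap}a); therefore the restriction of $F$ to $\rhosd^{-1}[K]$ satisfies the hypotheses of (b), whose conclusion is precisely Henkin-continuity of $f$. The main technical obstacle in this scheme is ensuring the compactness hypotheses line up so that $\rhosd$ is applied only on a compact piece of $\Cantor$, where it is uniformly continuous, rather than globally where it fails to be so by Example~\ref{x:Representations}a); the properness of $\rhosd$ combined with the ``maps compact to compact'' hypothesis on $F$ is exactly what makes this work.
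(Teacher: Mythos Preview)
Your proposal is correct and follows essentially the same route as the paper: factor through $\rhosd$ and $\rhosd^{-1}$, use Proposition~\ref{p:SignedDigit}d) for Henkin-continuity of $\rhosd^{-1}$, use compactness of $\rhosd^{-1}[K]$ to get uniform (hence Henkin) continuity of the relevant restriction of $\rhosd$, and then combine via Observation~\ref{o:Composition} and Lemma~\ref{l:Tighten}d). The only omission is that in part~(c) you verify Henkin-continuity and the compact-to-compact property of $F$ but do not argue that $F$ is actually a $(\rhosd,\rhosd)$--multirealizer of $f$; the paper handles this via Lemma~\ref{l:Tighten}a)+b), showing that $f$ tightens $\rhosd\circ F\circ\rhosd^{-1}$, which by Lemma~\ref{l:Tighten}e) is equivalent to $F$ being a multirealizer.
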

\begin{proof}
\begin{enumerate}
\item[a)]
Recall \mycite{Section~3}{Weihrauch} that a real relation
is relatively computable ~iff~ it has a continuous
$(\myrho,\myrho)$--realizer; equivalently
\mycite{Theorem~7.2.5.1}{Weihrauch}:
a continuous $(\rhosd,\rhosd)$--realizer $F$.
In particular, single-valued $F$
maps compact sets to compact sets.
Moreover, $F$ is a $(\rhosd,\rhosd)$--multirealizer 
according to Lemma~\ref{l:Tighten}f);
and has $\dom(F)=\dom(\rhosd|^K)$ compact
\cite[pp.209-210]{Weihrauch}, hence is even uniformly
continuous, i.e. Henkin-continuous.
Now apply b).
\item[b)]
Proposition~\ref{p:SignedDigit}d) asserts
$\rhosd^{-1}$ to be Henkin-continuous;
and so is 
$(\rhosd|^K)^{-1}=(\rhosd^{-1})|_K$, cmp. Observation~\ref{o:Composition}a).
Now $\range\big((\rhosd|^K)^{-1}\big)=\rhosd^{-1}[K]$ is compact;
which $F$ maps by hypothesis to some compact set $C\subseteq\Cantor$.
Therefore $\rhosd|_{_C}$ is uniformly (i.e. Henkin-)
continuous (Example~\ref{x:Representations}d);
and so is $\rhosd|_{_C}\circ F\circ(\rhosd|^K)^{-1}$
(Observation~\ref{o:Composition}b);
which, because of $C=\range\big(F\circ(\rhosd|^K)^{-1}\big)$,
coincides with $g:=\rhosd\circ F\circ\rhosd^{-1}$.
Now this $g$ by hypothesis tightens $f$;
hence $f$ is also Henkin-continuous 
(Observation~\ref{o:Composition}a).
Moreover, $g$ maps compact sets to compact sets 
according to Lemma~\ref{l:Tighten}d)
because each subterm
$\rhosd^{-1}$ \cite[pp.209-210]{Weihrauch},
$F$ (hypothesis), and 
$\rhosd$ (continuous) does so.
\item[c)]
Again, $\rhosd|^K$ and $\rhosd^{-1}$ are Henkin-continuous  
by Example~\ref{x:Representations}c) and
Proposition~\ref{p:SignedDigit}d);
hence so is the composition $F$ (Observation~\ref{o:Composition}a).
$F$ maps compact sets to compact sets according
to Lemma~\ref{l:Tighten}d);
note that $\range(f)\subseteq\IR=\dom(\rhosd^{-1})$
and $\range(\rhosd|^K)=K=\dom(f)$.
Finally, Lemma~\ref{l:Tighten}a+b) shows
$f$ to tighten $\rhosd\circ F\circ\rhosd^{-1}$.
\qed\end{enumerate}\end{proof}

%%%%%%%%%%%%%%%%%%%%%%%%%%%%%%%%%%%%%%%%%%%%%%%%%%%%
\subsection[\ldots but Henkin-Continuity does not imply Relative Computability]{%
Henkin-Continuity does not imply Relative Computability}

The relation from Example~\ref{x:Examples}c)
is Henkin-continuous but not relatively computable.
On the other hand, it violates the natural condition 
of (pointwise) compactness. Instead, we modify 
Example~\ref{x:noLocSel} to obtain (counter-)

\begin{myexample} \label{x:Counter}
Let 
\begin{alignat*}{2}
f_+ \quad&:=\quad \big((-\infty,0]\times\{0\}\big)
\;\cup\;\big\{ \big(x,(-1)^n/(n+1)\big) : n\in\IN, 1/(n+1)\leq x\leq 1/n \big\} \\
f_- \quad&:=\quad \big([0,\infty)\times\{1\}\big)
\cup\big\{ \big(-x,1+(-1)^n/(n+1)\big) : n\in\IN, 1/(n+1)\leq x\leq 1/n \big\} 
\end{alignat*}
Then $f_1:=f_+\cup f_-:[-1,+1]\toto[-1,+2]$ is compact, total,
and 1-Lipschitz (hence Henkin-continuous), 
but not relatively computable; see Figure~\ref{f:Counter}.
\end{myexample}

\begin{figure}[htb]
\centerline{\includegraphics[width=0.9\textwidth,height=0.3\textheight]{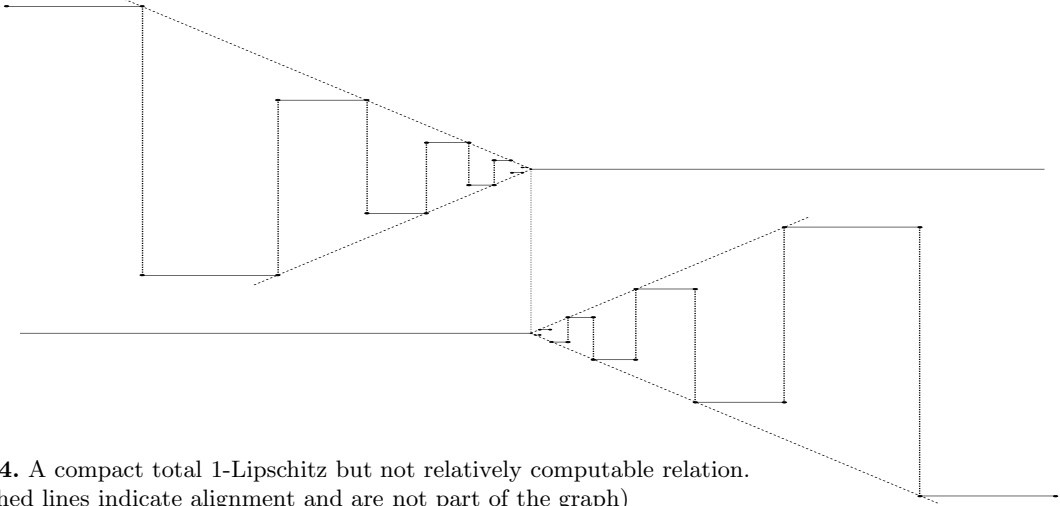}}
\vspace*{-8ex}\caption{\label{f:Counter}%
A compact total 1-Lipschitz but not relatively computable relation. \newline
(Dashed lines indicate alignment and are not part of the graph)}
\end{figure}

\begin{proof}
Both $f_+$ and $f_-$ are closed and bounded and total.
Moreover, the restriction $f_+\big|_{_{[-1,0]}}$ is 1-Lipschitz:
To $x\leq0$ set $y:=0$ and $\delta:=\varepsilon$ (1-Lipschitz);
now if $x'\leq 0$, $y':=0$ will do; and if $0<x'<\delta$, 
consider $n\in\IN$ with $1/(n+1)\leq x'\leq1/n$,
$y':=(-1)^n/(n+1)\in f_+(x')$ 
has $|y'-y|=1/(n+1)\leq x'<\delta=\varepsilon$.
Similarly, $f_-\big|_{_{[0,1]}}$ is 1-Lipschitz;
hence $f_1$ is 1-Lipschitz---but not relatively computable:
Given a name of $x=0$, the putative realizer has the choice of
producing either a name of $y_+=0$ or of $y_-=1$: knowing 
$x$ only up to some $\delta=1/n$, $n\in\IN$. In the first case,
i.e. already tied to $f_+$, switch to an input $x':=1/(n+1)$: 
clearly a point of discontinuity of $f_+$.
A similar contradiction arises in the second case.
\qed\end{proof}

%%%%%%%%%%%%%%%%%%%%%%%%%%%%%%%%%%%%%%%%%%%%%%%%%%
\section{Iterated Henkin-Continuity}
(Counter-)Example~\ref{x:Counter} suggests to strengthen Definition~\ref{d:Henkin}:

\begin{definition} \label{d:Henkin2}
Call a total\footnote{This requirement is employed only for notational
convenience and can always be satisfied by proceeding to the
restriction $f|_{\dom(f)}$.} multifunction $f:X\toto Y$ 
\textsf{doubly Henkin-continuous} iff the following holds:
\[ \left(\begin{array}{cc}
\forall \varepsilon>0 & \exists \delta>0 \\[0.5ex]
\forall x\!\in\! X \; & \exists y\!\in\! f(x)
\end{array}\right) \;
\left(\begin{array}{cc}
\forall \varepsilon'>0 & \exists \delta'>0 \\[0.5ex]
\forall x'\!\in\!\ball(x,\delta) \; & 
\exists y'\!\in\! f(x')\cap\ball(y,\varepsilon)
\end{array}\right) \;
\forall x''\!\in\!\ball(x',\delta') \;  
\exists y''\!\in\! f(x'')\cap\ball(y',\varepsilon')
\]
Even more generally, \textsf{$\ell$-fold Henkin-continuity} 
($\ell\in\IN$) is to mean
\begin{multline} \label{e:Henkin2}
\left(\begin{array}{cc}
\forall \varepsilon_1>0 & \exists \delta_1>0 \\[0.5ex]
\forall x_1\in X \; & \exists y_1\in f(x_1)
\end{array}\right) \quad
\left(\begin{array}{cc}
\forall \varepsilon_2>0 & \exists \delta_2>0 \\[0.5ex]
\forall x_2\in\ball(x_1,\delta_1) \; & \exists y_2\in f(x_2)\cap\ball(y_1,\varepsilon_1)
\end{array}\right) \;\;
\cdots \\[1ex]
\cdots\;\; \left(\begin{array}{cc}
\forall \varepsilon_\ell>0 & \exists \delta_\ell>0 \\[0.5ex]
\forall x_\ell\in\ball(x_{\ell-1},\delta_{\ell-1}) \; & 
\exists y_\ell\in f(x_\ell)\cap\ball(y_{\ell-1},\varepsilon_{\ell-1})
\end{array}\right) \\[1ex]
\forall x_{\ell+1}\in\ball(x_\ell,\delta_\ell) \quad
\exists y_{\ell+1}\in\ball(y_\ell,\varepsilon_\ell)\cap f(x_{\ell+1}) \enspace . 
\end{multline}
\end{definition}
Generalizing Example~\ref{x:Counter}, we observe that
this notion indeed gives rise to a proper hierarchy:

\begin{myexample}[Hierarchy] \label{x:Hierarchy}
To every $\ell\in\IN$
there exists a compact total relation $f_\ell:[-1,1]\toto[-1,2]$
which is $\ell$-fold Henkin-continuous 
but not $(\ell+1)$-fold Henkin-continuous.
\end{myexample}
To this end, consider $\ell=1$ and 
recall that the relation in Figure~\ref{f:Counter}
is (1-fold) Henkin-continuous.
To $x=0$ w.l.o.g. suppose $y=0$ is chosen
and to $\varepsilon:=1/4$ some $\delta>0$.
Now consider $x':=1/n<\delta$:
Since $f_+$ is discontinuous at $x'$,
both choices $y'=s(-1)^n/(n+1)$ and $y'=-(-1)^n/(n+2)$ 
from $f(x')$ contradict 2-fold Henkin-continuity
for some $x''=x'\pm\varepsilon'$.
\\
Figure~\ref{f:Counter2} depicts an iteration $f_2$ 
of Figure~\ref{f:Counter} which, similarly, can be
seen 2-fold Henkin-continuous but not 3-fold.
Repeating this iteration, one obtains a fractal sequence
$f_\ell$ with the claimed properties.

\begin{figure}[htb]
\includegraphics[width=0.98\textwidth,height=0.35\textheight]{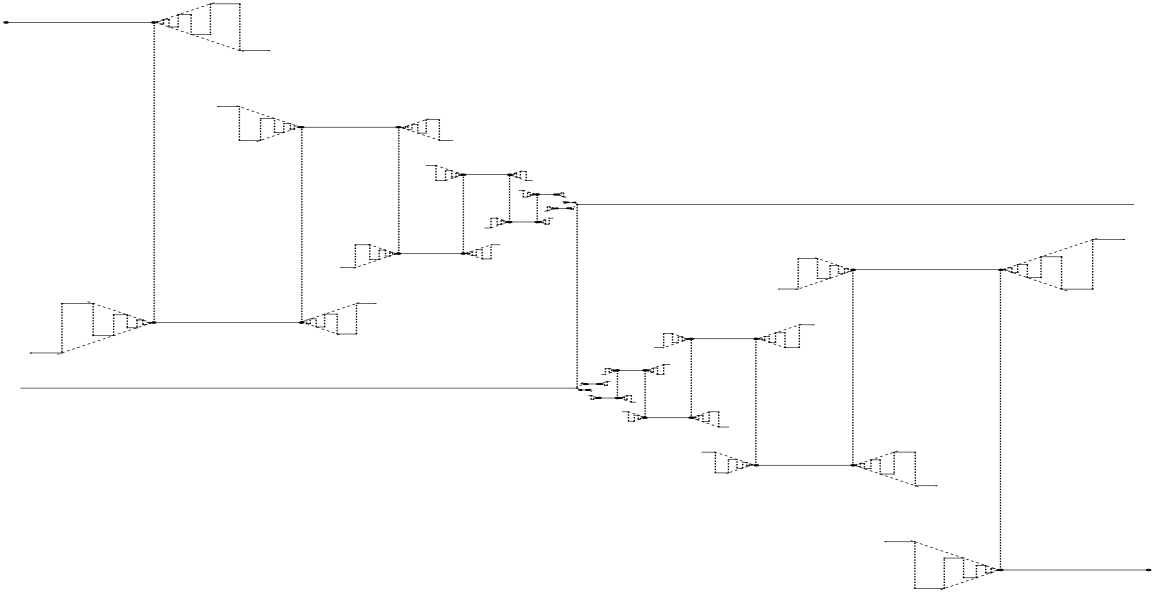}
\vspace*{-2ex}\caption{\label{f:Counter2}%
A compact total 2-fold, but not 3-fold, Henkin-continuous relation}
\end{figure}

Many properties of Henkin-continuity
translate to the iterated case:

\begin{lemma} \label{l:Henkin2}
Fix $\ell\in\IN$.
\begin{enumerate}
\item[a)]
If $f$ is $(\ell+1)$-fold Henkin-continuous, it is also
$\ell$-fold Henkin-continuous; but not necessarily vice versa.
\item[b)]
If $f:X\toto Y$ is uniformly strongly continuous
(and in particular if $f:X\to Y$ is uniformly continuous),
it is $\ell$-fold Henkin-continuous for every $\ell$.
\item[c)]
If $f:X\times Y$ is $\ell$-fold Henkin-continuous
and tightens $g:\subseteq X\times Y$,
then $g$ is $\ell$-fold Henkin-continuous 
(on $\dom(g)$) as well.
\item[d)]
If $f:X\times Y$ and $g:Y\times Z$
are both $\ell$-fold Henkin-continuous,
then so is $g\circ f$ (on $\dom(g\circ f)$).
\end{enumerate}
\end{lemma}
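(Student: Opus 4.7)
My plan is to treat the four clauses in increasing order of difficulty. Parts (a) and (c) reduce to simple manipulations of the Henkin block structure; (b) is an inductive construction using the freedom in uniform strong continuity; and (d), the main effort, chains the two Henkin properties exactly as in Observation~\ref{o:Composition}b) but with nested block-level bookkeeping.

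For (a), the forward direction is purely logical: to verify the $\ell$-fold property given $(\ell+1)$-fold, feed the inputs $\varepsilon_1,\ldots,\varepsilon_\ell$, any positive $\varepsilon_{\ell+1}$, and the universal points $x_1,\ldots,x_{\ell+1}$ (together with an arbitrary $x_{\ell+2}\in\ball(x_{\ell+1},\delta_{\ell+1})$) into the $(\ell+1)$-fold hypothesis, and discard everything past block $\ell$. Failure of the converse is exhibited by Example~\ref{x:Hierarchy}. For (c), recall that ``$f$ tightens $g$'' means $\dom(g)\subseteq\dom(f)=X$ and $f(x)\subseteq g(x)$ for every $x\in\dom(g)$; simply run $f$'s $\ell$-fold Henkin property on the same $\varepsilon_i$'s and $x_i$'s as for $g$ (legitimate because $\dom(g)\subseteq X$), and observe that the produced witnesses $y_i\in f(x_i)\subseteq g(x_i)$ automatically serve for $g$.

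For (b), given $\varepsilon_1,\ldots,\varepsilon_\ell$ set $\delta_i:=\delta(\varepsilon_i)$, where $\delta(\cdot)$ is the modulus of uniform strong continuity. Pick any $y_1\in f(x_1)$, which is nonempty by totality. Inductively at block $i\geq 2$: given $x_i\in\ball(x_{i-1},\delta_{i-1})$ and the previously chosen $y_{i-1}\in f(x_{i-1})$, uniform strong continuity applied to the pair $(x_{i-1},y_{i-1})$ with tolerance $\varepsilon_{i-1}$ and displacement $x_i$ produces some $y_i\in\ball(y_{i-1},\varepsilon_{i-1})\cap f(x_i)$. The inner $\forall y\in f(x)$ of uniform strong continuity is precisely what makes this inductive construction robust against the particular $y_{i-1}$ selected at the previous stage; and the resulting dependencies $\delta_i\mapsto\varepsilon_i$ (independent of $x_i$) and $y_i\mapsto x_i$ (independent of $\varepsilon_i$) fit each Henkin block.

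For (d), I chain the two Henkin properties by a double pass. First use $g$'s $\ell$-fold Henkin-continuity to convert each $\varepsilon_i$ into a modulus $\delta_i^g$; then feed these $\delta_i^g$'s as the $\varepsilon$-inputs to $f$'s $\ell$-fold Henkin-continuity, obtaining moduli $\delta_i^f$, which serve as the composition's $\delta_i$. Inductively at block $i$: given $x_i\in\ball(x_{i-1},\delta_{i-1}^f)$, invoke $f$'s property to get $y_i\in\ball(y_{i-1},\delta_{i-1}^g)\cap f(x_i)$; then invoke $g$'s property with $y_i$ in the role of $g$'s universal point to get $z_i\in\ball(z_{i-1},\varepsilon_{i-1})\cap g(y_i)$, which lies in $(g\circ f)(x_i)$. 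The main obstacle, and the only nontrivial aspect of the plan, is verifying that the dependencies remain compatible with the Henkin structure of the composition at every block: $\delta_i^f$ depends (through $\delta_i^g$) only on $\varepsilon_i$ and earlier data but not on $x_i$; and $z_i$, computed via $g$'s block-$i$ Henkin witness from $y_i$, inherits dependence on $x_i$ from $y_i$ while avoiding dependence on $\varepsilon_i$, as required.
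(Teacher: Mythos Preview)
Your proposal is correct and follows essentially the same route as the paper: parts (a)--(c) are handled just as in the paper (trivial truncation plus Example~\ref{x:Hierarchy}; induction using the universal $\forall y$ in uniform strong continuity; the universal quantifiers shrink and the existential ones grow under loosening), and your block-by-block chaining in (d) is exactly the generalization of Observation~\ref{o:Composition}b) that the paper sketches. One small wording caution: your ``double pass'' phrasing suggests computing all $\delta_i^g$ before touching $f$, but in general $\delta_i^g$ may depend on $y_1,\dots,y_{i-1}$, so the moduli must be produced \emph{inside} the same block-by-block induction you describe afterwards---which you in effect acknowledge in your final dependency check.
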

\begin{proof}
\begin{enumerate}
\item[a)] 
The first claim is obvious;
failure of the converse is demonstrated
in Example~\ref{x:Hierarchy}.
\item[b)]
immediate induction.
\item[c)] 
As in the proofs of
Observation~\ref{o:Composition}a),
$g$ restricts the range of the universal quantifiers
occurring in Equations~(\ref{e:Henkin2})
and extends the range of the existential quantifiers.
\item[d)]
By hypothesis we have Equation~(\ref{e:Henkin2}) for $f$ and 
the following for $g$:
\begin{multline*}
\left(\begin{array}{cc}
\forall \delta_1>0 & \exists \gamma_1>0 \\[0.5ex]
\forall y_1\in Y \; & \exists z_1\in g(y_1)
\end{array}\right) \quad
\left(\begin{array}{cc}
\forall \delta_2>0 & \exists \gamma_2>0 \\[0.5ex]
\forall y_2\in\ball(y_1,\gamma_1) \; & \exists z_2\in f(y_2)\cap\ball(z_1,\delta_1)
\end{array}\right) \;\;
\cdots \\[1ex]
\cdots\;\; \left(\begin{array}{cc}
\forall \delta_\ell>0 & \exists \gamma_\ell>0 \\[0.5ex]
\forall y_\ell\in\ball(y_{\ell-1},\gamma_{\ell-1}) \; & 
\exists z_\ell\in f(y_\ell)\cap\ball(z_{\ell-1},\delta_{\ell-1})
\end{array}\right) \\[1ex]
\forall y_{\ell+1}\in\ball(y_\ell,\gamma_\ell) \quad
\exists z_{\ell+1}\in\ball(z_\ell,\delta_\ell)\cap f(y_{\ell+1}) \enspace . 
\end{multline*}
Now inductively, to
$\varepsilon_{k+1}>0$ and to
$x_{k+1}\in\dom(g\circ f)\cap\ball(x_k,\delta_k)$,
there exist $\delta_{k+1}>0$ independent of $x_{k+1}$ and
$y_{k+1}\in f(x_{k+1})\cap\ball(y_k,\varepsilon_k)$
independent of $\varepsilon_{k+1}$;
to which in turn there exist 
$\gamma_{k+1}>0$ independent of $y_{k+1}$
and $z_{k+1}\in g(y_{k+1})\cap\ball(z_k,\delta_k)$
independent of $\delta_k$.
%Note that $z_{k+1}\in\big(g\circ f\big)(x_{k+1})$.
\qed\end{enumerate}\end{proof}
%
%%%%%%%%%%%%%%%%%%%%%%%%%%%%%%%%%%%
\subsection{Examples and Properties}
Note that $\delta_2$ in Equation~(\ref{e:Henkin2}),
although independent of $x_2$, may well depend on $x_1$:
which perhaps does not entirely express what might
be expected from a notion of uniform continuity for relations.
On the other hand, just like continuity on a compact set is in
the single-valued case equivalent to uniform continuity, we establish

\begin{lemma}  \label{l:Henkin3}
For compact $X$, total $f:X\toto Y$, and $\ell\in\IN$, the following
are equivalent:
\begin{enumerate}
\setlength{\abovedisplayskip}{2pt plus 3pt minus 3pt}
\item[i)]  $f$ is $\ell$-fold Henkin-continuous \medskip
\item[ii)] 
$\displaystyle
\left(\begin{array}{c}
\forall \varepsilon>0 \quad
\exists \delta>0 \\[0.5ex]
\forall x_1\in X \exists y_1\in f(x_1)\;\;\forall x_2\in X\exists y_2\in Y
\;\cdots\;
\forall x_\ell\exists y_\ell\;
\forall x_{\ell+1}\exists y_{\ell+1}
\end{array}\right)$ 
\begin{equation} \label{e:Henkin3}
\qquad\qquad\bigwedge\nolimits_{k=1}^\ell
\big(x_{k+1}\in\cball(x_k,\delta)\rightarrow 
  y_{k+1}\in f(x_{k+1})\cap\ball(y_k,\varepsilon)\big)
\end{equation}
\item[iii)]
There exists a total function $\lambda:\IN\to\IN$ 
such that
\begin{multline} \label{e:Henkin4}
\forall x_1 \;
\exists y_1\in f(x_1) \quad
\forall m_1\in\IN \;
\forall x_2\in\cball(x_1,2^{-\lambda(m_1)}) \;
\exists y_2\in f(x_2)\cap\cball(y_1,2^{-m_1}) \\
\forall m_2\in\IN \;\;
\forall x_3\in\cball(x_2,2^{-\lambda(m_2)}) \;\;
\exists y_3\in f(x_3)\cap\cball(y_2,2^{-m_2})\;\;\cdots\qquad\qquad \\
\cdots\;\; 
%\forall m_{\ell-1}\in\IN \;
%\forall x_\ell\in\cball(x_{\ell-1},2^{-\lambda(m_{\ell-1})}) \;
%\exists y_\ell\in f(x_{\ell})\cap\cball(y_{\ell-1},2^{-m_{\ell-1}}) \\
\forall m_{\ell}\in\IN \;
\forall x_{\ell+1}\in\cball(x_\ell,2^{-\lambda(m_{\ell})}) \;
\exists y_{\ell+1}\in f(x_{\ell+1})\cap\cball(y_\ell,2^{-m_{\ell}}) \enspace . 
\end{multline}
\end{enumerate}
For non-compact $X$, it still holds `i)$\Leftarrow$ii)$\Leftrightarrow$iii)''.
\end{lemma}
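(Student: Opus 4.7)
The plan is to handle the three implications separately, with only
i)$\Rightarrow$ii) requiring compactness of $X$. I would first dispatch
ii)$\Leftrightarrow$iii) by Skolemization and ii)$\Rightarrow$i) by
reusing ii)'s witnesses, then tackle i)$\Rightarrow$ii) via a
finite-subcover argument in the style of Lemma~\ref{l:Implications}g).

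For ii)$\Leftrightarrow$iii), I would Skolemize $\forall\varepsilon>0\,\exists\delta>0$
using the reduction $\varepsilon=2^{-m}$ and set
$\lambda(m):=\lceil-\log_2\delta(2^{-m})\rceil$, so that $2^{-\lambda(m)}\leq\delta(2^{-m})$.
The Henkin layout of ii) makes the witnesses $y_k$ independent of $\varepsilon$,
so they can be reused verbatim as iii)'s witnesses (whose permitted dependence
on earlier $m_j$ then becomes vacuous); conversely, iii) applied with all
$m_j:=\lceil-\log_2\varepsilon\rceil$ retrieves ii). For ii)$\Rightarrow$i),
set $\delta_k:=\delta^{(\mathrm{ii})}(\varepsilon_k)$, which respects i)'s
Henkin dependencies since $\delta_k$ is permitted to depend on $\varepsilon_k$,
and reuse ii)'s $\varepsilon$-independent witnesses $y_k^{(\mathrm{ii})}(x_1,\ldots,x_k)$;
the level-$k$ constraint of i) then follows from ii) applied with $\varepsilon=\varepsilon_k$.

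The main obstacle is i)$\Rightarrow$ii) for compact $X$, since i)'s
Skolem function $\delta_k(\varepsilon,x_1,\ldots,x_{k-1})$ genuinely depends
on earlier $x_j$, and one must uniformize it. I would induct on $\ell$,
the base $\ell=1$ being exactly Henkin-continuity (whose $\delta$ is already
$x$-uniform by its inner Henkin block). For the inductive step, fix
$\varepsilon>0$ and set all $\varepsilon_j:=\varepsilon$. At level $k$,
the Skolem $\delta_k(\varepsilon,\vec x)>0$ induces an open cover of
the compact product $X^{k-1}$ by balls
$\ball\bigl(\vec x,\delta_k(\varepsilon,\vec x)/2\bigr)$,
from which a finite subcover yields a uniform
$\bar\delta_k(\varepsilon):=\min_i\delta_k(\varepsilon,\vec x^{(i)})/2>0$.
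A locality observation in the spirit of Lemma~\ref{l:Implications}g)---namely
that the nested-existence predicate at $\vec x$ with radius $\delta$ persists
at any $\vec z\in\ball(\vec x,\delta/2)$ with radius $\delta/2$---transfers
witnesses from the finitely many representatives to arbitrary inputs.
Taking $\delta(\varepsilon):=\min_{k\leq\ell}\bar\delta_k(\varepsilon)$
then yields a uniform modulus for ii). The delicate point is propagating this
locality through all $\ell$ nested layers coherently without losing control of
the ball radii across levels; the one-level template from Lemma~\ref{l:Implications}g)
must be carefully iterated.
\qed
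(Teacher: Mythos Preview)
Your overall plan matches the paper's: Skolemize for ii)$\Leftrightarrow$iii), reuse ii)'s $\varepsilon$-independent $y$-witnesses for ii)$\Rightarrow$i), and invoke compactness for i)$\Rightarrow$ii). In fact your ii)$\Rightarrow$i) is arguably cleaner than the paper's stated argument: you set $\delta_k:=\delta^{(\mathrm{ii})}(\varepsilon_k)$, which respects the Henkin dependencies, whereas the paper writes $\varepsilon:=\min\{\varepsilon_1,\ldots,\varepsilon_\ell\}$, a quantity not available when $\delta_1$ must be chosen (the argument still goes through because the $y_k$ are $\varepsilon$-independent, but your formulation makes this explicit).

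The one place your plan diverges non-trivially is i)$\Rightarrow$ii). The paper does \emph{not} cover the product $X^{k-1}$; it swaps each $\exists\delta_j$ past each $\forall x_k$ one coordinate at a time, for $1\le j\le k\le\ell$, at each step covering only the compact ball $\cball(x_{k-1},\delta_{k-1})\subseteq X$ with the earlier $x$'s held fixed. This matters for two reasons. First, the Skolem function $\delta_k$ from i) is guaranteed only on valid chains (those with $x_{j+1}\in\ball(x_j,\delta_j)$), not on all of $X^{k-1}$, so your cover is of the wrong set. Second, and more seriously, your locality claim---that the nested-existence predicate at $\vec x$ with radius $\delta$ persists at nearby $\vec z$ with radius $\delta/2$---does not transfer directly: perturbing $x_1,\ldots,x_{k-1}$ also perturbs the witnesses $y_1,\ldots,y_{k-1}$, so the target ball $\ball(y_{k-1},\varepsilon)$ shifts, and the triangle-inequality trick from Lemma~\ref{l:Implications}g) no longer applies verbatim. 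The paper's one-variable-at-a-time swap avoids this by keeping $y_1,\ldots,y_{k-1}$ fixed during each compactness step. Your closing caveat about ``propagating locality through all $\ell$ nested layers'' is precisely this obstacle; resolving it essentially forces you into the paper's sequential quantifier-swapping.
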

We call $\lambda$ as in iii) a
\textsf{modulus of $\ell$-fold Henkin-continuity of $f$}.
\begin{proof}
Note that $\delta_k$ in Equation~(\ref{e:Henkin2}) may depend
on $x_1,\ldots,x_{k-1}$; and $y_k$ on $\varepsilon_1,\ldots,\varepsilon_{k-1}$.
\begin{description}
\item[ii)$\Rightarrow$i):]
Apply Equation~(\ref{e:Henkin3}) 
to $\varepsilon:=\min\{\varepsilon_1,\ldots,\varepsilon_\ell\}$
and take $\delta_1:=\cdots=:\delta_\ell:=\delta$
in (\ref{e:Henkin2}).
\item[i)$\Rightarrow$ii):]
Recall that
$\binom{\forall\varepsilon_k\exists\delta_k}{\forall x_k\exists y_k}$
clearly implies $\forall\varepsilon_k\forall x_k\exists\delta_k\exists y_k$.
Moreover we may replace the open balls $\ball(x_k,\delta_k)$ with
their topological closures $\cball(x_k,\delta_k)$ by
reducing $\delta_k$ a bit.
Now exploit compactness and slightly extend (the proof of) Lemma~\ref{l:Implications}g) 
to see that $\delta_k$ can be chosen independent of $x_1,\ldots,x_k$,
that is, 
$\forall\varepsilon_j\forall x_k\in\cball(x_{k-1},\delta_{k-1})
\exists y_k\exists\delta_j\forall x_{k+1}\exists y_{k+1}$
implies $\forall\varepsilon_j\exists\delta_j\forall x_k
\in\cball(x_{k-1},\delta_{k-1})
\exists y_k\forall x_{k+1}\exists y_{k+1}$
for every $1\leq j\leq k\leq\ell$.
More formally, let $\Phi(\delta_j,x_k,\delta_k)$ denote the formula
\[ \exists y_k\in f(x_k)\cap\ball(y_{k-1},\varepsilon_{k-1})\quad
\forall x_{k+1}\in\cball(x_k,\delta_k)\cdots \]
Then, by hypothesis, to $\varepsilon_j>0$ and arbitrary but fixed 
$x_k\in\cball(x_{k-1},\delta_{k-1})$,
there exists $\delta_j=\delta_j(x_k)>0$
such that $\Phi(\delta_j,x_k,\delta_k)$ holds. Now by triangle inequality,
every $x_k'\in\ball(x_k,\delta_k/2)\cap\cball(x_{k-1},\delta_{k-1})$
satisfies $\Phi\big(\delta_j(x_k),x_k',\delta_k/2\big)$.
The relatively open balls $\ball(x_k,\delta_k/2)\cap\cball(x_{k-1},\delta_{k-1})$ 
cover compact $\cball(x_{k-1},\delta_{k-1})\subseteq X$, hence
finitely many of them suffice to do so. And these induce finitely many
$\delta_j(x_k)$, such that their minimum $\delta_j$ satisfies
$\Phi(\delta_j,x_k',\delta_k/2)$ for every $x_k'\in\cball(x_{k-1},\delta_{k-1})$.

Inductively swapping quantifiers as justified above, we deduce 
\begin{multline*}
\left(\begin{array}{cc}
\forall \varepsilon_1>0 \;&\;
\exists \delta_1>0 \\[0.5ex]
\forall x_1\in X &\exists y_1\in f(x_1)
\end{array}\right) \;\;
\forall\varepsilon_2>0\;\exists\delta_2>0
\;\cdots\;
\forall\varepsilon_\ell>0\;\exists\delta_\ell>0
\\[0.5ex]
\forall x_2\in\cball(x_1,\delta_1) \; \exists y_2\in f(x_2)\cap\ball(y_1,\varepsilon_1)
\;\cdots\;\qquad\qquad\qquad
\\[0.5ex] \qquad\qquad\cdots\;
\forall x_\ell\in\cball(x_{\ell-1},\delta_{\ell-1}) \; 
\exists y_\ell\in f(x_\ell)\cap\ball(y_{\ell-1},\varepsilon_{\ell-1}) \\[0.5ex]
\forall x_{\ell+1}\in\cball(x_\ell,\delta_\ell) \;
\exists y_{\ell+1}\in\ball(y_\ell,\varepsilon_\ell)\cap f(x_{\ell+1}) 
\end{multline*}
and, by one further step, obtain independence of $\delta_2,\ldots,\delta_\ell$
even from $x_1\in X$:
\begin{multline*}
\left(\begin{array}{c}
\forall \varepsilon_1
\exists \delta_1 \;
\forall\varepsilon_2\exists\delta_2
\;\cdots\;
\forall\varepsilon_\ell\exists\delta_\ell \\[0.5ex]
\forall x_1\in X \quad \exists y_1\in f(x_1)
\end{array}\right) \;\;
\forall x_2\in\cball(x_1,\delta_1) \; \exists y_2\in f(x_2)\cap\ball(y_1,\varepsilon_1)
\;\cdots\; \\[0.5ex]
\cdots\;
\forall x_\ell\in\cball(x_{\ell-1},\delta_{\ell-1}) \; 
\exists y_\ell\in f(x_\ell)\cap\ball(y_{\ell-1},\varepsilon_{\ell-1}) \\[0.5ex]
\qquad\qquad\forall x_{\ell+1}\in\cball(x_\ell,\delta_\ell) \;
\exists y_{\ell+1}\in\ball(y_\ell,\varepsilon_\ell)\cap f(x_{\ell+1})
\end{multline*}
Apply this to given $\varepsilon>0$ by choosing
let $\varepsilon_1:=\cdots=:\varepsilon_\ell:=\varepsilon$
and taking $\delta:=\min\{\delta_1,\ldots,\delta_\ell\}$.
\item[ii)$\Rightarrow$iii):]
For $m\in\IN$ set $\varepsilon:=2^{-m}$, apply ii)
to obtain some $\delta=\delta(m)$, and define $\lambda(m):=\lceil\log_2(1/\delta)\rceil$.
We show inductively that this satisfies Equation~(\ref{e:Henkin4}).
To $x_1\in X$, ii) yields some $y_1\in f(x_1)$ independent of $\varepsilon$;
now given furthermore $m_1\in\IN$, apply ii) to $\varepsilon:=2^{-m_1}$ and obtain
some $\delta>0$ (which by construction dominates $2^{-\lambda(m_1)}$)
and to every $x_2\in\ball(x_1,2^{-\lambda(m_1)})$ some $y_2\in f(x_2)\cap\ball(y_1,2^{-m_1})$;
next, to $m_2\in\IN$, ii) with $\varepsilon:=2^{-m_2}$ yields
some $\delta\geq2^{-\lambda(m_2)}$ and
to every $x_3\in\ball(x_2,2^{-\lambda(m_2)})$ some $y_3\in f(x_3)\cap\ball(y_2,2^{-m_2})$;
and so on.
\item[iii)$\Rightarrow$ii):]
To $\varepsilon>0$, take $m:=\lceil\log_2(1/\varepsilon)\rceil$ and 
$\delta:=2^{-\lambda(m)}$ with $\lambda:\IN\to\IN$ according to iii).
Then by Equation~(\ref{e:Henkin4}) inductively, to every 
$m_k:=m$ and every $x_{k+1}\in\ball(x_k,\delta)=\ball(x_k,2^{-\lambda(m_k)})$,
there exists some $y_{k+1}\in f(x_{k+1})\cap\ball(y_k,2^{-m_k})\subseteq\ball(y_k,\varepsilon)$.
\qed\end{description}\end{proof}
\begin{observation} \label{o:EquiProduct}
If the family $f_i:X_i\toto Y_i$ ($i\in I$) is $\ell$-fold Henkin-\emph{equi}continuous
in the sense of have a common modulus $\lambda$ of $\ell$-fold Henkin-continuity,
this will also be a modulus of $\ell$-fold Henkin-continuity
for $\prod_{i\in I} f_i:\prod_{i\in I} X_i\toto\prod_{i\in I} Y_i$
with respect to the maximum metrics $d\big((x_i),(x_i')\big)=\max_{i\in I} d_i(x_i,x_i')$
and $d\big((y_i),(y_i')\big)=\max_{i\in I} d_i(y_i,y_i')$.
\end{observation}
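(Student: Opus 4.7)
The plan is to invoke the modulus characterization from Lemma~\ref{l:Henkin3}iii): write $\bar x_k := (x_{k,i})_{i\in I}$ and $\bar y_k := (y_{k,i})_{i\in I}$, and exploit that the maximum metric makes $\bar x\in\cball(\bar x',r)$ equivalent to $x_i\in\cball(x_i',r)$ for \emph{every} $i\in I$. Thus a ball in the product decomposes into a product of balls of equal radius, which is precisely what is needed to reduce the Henkin-continuity condition for $\prod_{i\in I}f_i$ to a componentwise statement with the \emph{same} numeric radii.

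First I would, for each $i\in I$, Skolemize Equation~(\ref{e:Henkin4}) for $f_i$ using the common modulus $\lambda$: this yields a selection $x_{1,i}\mapsto y_{1,i}\in f_i(x_{1,i})$ independent of the $m_k$'s, and recursively (Skolem) choice functions picking $y_{k+1,i}\in f_i(x_{k+1,i})\cap\cball(y_{k,i},2^{-m_k})$ from $x_{1,i},\ldots,x_{k+1,i}$, $m_1,\ldots,m_k$, and the previous $y_{j,i}$'s. These choices exist by the $\ell$-fold Henkin-equicontinuity hypothesis, and crucially they all use the \emph{same} $\lambda$.

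Next I would assemble the product selections coordinatewise: to any $\bar x_1\in\prod_i X_i$ take $\bar y_1:=(y_{1,i})_{i\in I}\in\prod_i f_i(x_{1,i})=\big(\prod_i f_i\big)(\bar x_1)$, and similarly for each subsequent step. Checking Equation~(\ref{e:Henkin4}) for $\prod_i f_i$ with modulus $\lambda$ is then routine: given $\bar x_{k+1}\in\cball(\bar x_k,2^{-\lambda(m_k)})$ in the maximum metric, the componentwise condition $x_{k+1,i}\in\cball(x_{k,i},2^{-\lambda(m_k)})$ holds for each $i$; so each $y_{k+1,i}\in f_i(x_{k+1,i})\cap\cball(y_{k,i},2^{-m_k})$ exists by construction, and taking the supremum over $i\in I$ gives $\bar y_{k+1}\in\cball(\bar y_k,2^{-m_k})$ in the product metric as well as $\bar y_{k+1}\in\big(\prod_i f_i\big)(\bar x_{k+1})$.

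The only point that requires any thought is that the Henkin-quantifier dependency pattern be preserved under this assembly, i.e.\ that $\bar y_k$ depends only on the data permitted by Equation~(\ref{e:Henkin4}) and not on forbidden future data. This is automatic: since each coordinate Skolem function already respects the prescribed dependencies, so does their Cartesian product. No compactness, quantifier swap, or diagonal argument is required; the proof is essentially a coordinatewise restatement, and the hypothesis of a \emph{common} modulus $\lambda$ is exactly what prevents the otherwise unbounded coordinatewise $\delta_i$'s from degrading in the max metric.
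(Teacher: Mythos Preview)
Your argument is correct; the paper states this as an Observation without proof, treating the coordinatewise verification as immediate. Your spelling-out via Lemma~\ref{l:Henkin3}iii)---decomposing max-metric balls into products of equal-radius balls and assembling the Skolem choices componentwise---is exactly the intended (tacit) reasoning.
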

Note also that equivalence of the Cauchy representation $\myrho$
to the signed digit representation $\rhosd$ means that its inverse 
$\rhosd^{-1}:\IR\toto\Sigma^\omega$ be computable.
Hence Fact~\ref{f:VascoPeter94} asserts that $\rhosd^{-1}$ has a
strongly continuous (and w.l.o.g. pointwise compact) tightening.
We now strengthen this as well as Proposition~\ref{p:SignedDigit}c)+d):

\begin{proposition} \label{p:SignedDigit2}
\begin{enumerate}
\item[a)]
Let $x=\sum_{n=-N}^\infty a_n 2^{-n}$ be a signed digit expansion
and $k\in\IN$ such that 
$(a_n,a_{n+1})\in\{\sdone\sdzero,\sdminus\sdzero,\sdzero\sdone,\sdzero\sdminus,\sdzero\sdzero\}$
for each $n>k$.
Then every $x'\in\cball(x,2^{-k}/\mathbf{6})$ admits a signed digit expansion
$x'=\sum_{n=-N}^\infty b_n 2^{-n}$ satisfying $a_n=b_n\forall n\leq k$
\\ \underline{and} $(b_n,b_{n+1})
\in\{\sdone\sdzero,\sdminus\sdzero,\sdzero\sdone,\sdzero\sdminus,\sdzero\sdzero\}$
for all $n>k\mathbf{+1}$.
\item[b)]
Let 
\quad $\calD\;:=\; \big\{\bar\sigma\in\dom(\rhosd):\sigma_N=\sddot,
(\sigma_n,\sigma_{n+1})\in
\{\sdone\sdzero,\sdminus\sdzero,\sdzero\sdone,\sdzero\sdminus,\sdzero\sdzero\}
\;\forall n>N \big\}$.
Then $(\rhosd|_\calD)^{-1}:\IR\toto\calD$ tightens the signed digit representation
and is uniformly strongly continuous with $\delta(2^{-n-1}):=2^{-n}/6$.
\item[c)]
In particular, $\rhosd^{-1}$ is 
$\ell$-fold Henkin-continuous for every $\ell\in\IN$ with modulus $\lambda:m\mapsto m+2$.
\end{enumerate}
\end{proposition}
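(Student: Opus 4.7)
My plan is to handle (a) first, then use it for (b), and deduce (c) from (b) by combining uniform strong continuity with the general tools on $\ell$-fold Henkin-continuity.

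For (a), I set $x'' := \sum_{n \le k} a_n 2^{-n}$. The hypothesis that $(a_n, a_{n+1}) \in \{\sdone\sdzero, \sdminus\sdzero, \sdzero\sdone, \sdzero\sdminus, \sdzero\sdzero\}$ for every $n > k$ means that $x - x'' = \sum_{n \ge k+1} a_n 2^{-n}$ is a sparse expansion starting at position $k+1$, so Proposition~\ref{p:SignedDigit}b) yields $|x - x''| \le \tfrac{2}{3} \cdot 2^{-k}$. Combined with the hypothesis $|x - x'| \le 2^{-k}/6$, the triangle inequality delivers $|x' - x''| \le \tfrac{5}{6} \cdot 2^{-k} = \tfrac{5}{3} \cdot 2^{-k-1}$. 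I then choose $b_{k+1} \in \{\sdminus, \sdzero, \sdone\}$ so that $|x' - x'' - b_{k+1} \cdot 2^{-k-1}| \le \tfrac{2}{3} \cdot 2^{-k-1}$, which is possible because the three intervals of radius $\tfrac{2}{3} \cdot 2^{-k-1}$ centred at $b \cdot 2^{-k-1}$ (for $b \in \{-1, 0, +1\}$) cover $[-\tfrac{5}{3}, \tfrac{5}{3}] \cdot 2^{-k-1}$. Invoking Proposition~\ref{p:SignedDigit}b) once more, this time with $k+1$ in place of $k$, yields a sparse expansion $\sum_{n > k+1} b_n 2^{-n}$ of the remainder; concatenating $(a_n)_{n \le k}$, $b_{k+1}$, and $(b_n)_{n > k+1}$ produces the expansion of $x'$ required.

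For (b), I apply (a) with $k := n$ to any $\bar\sigma \in \calD$ encoding $x$: this converts a neighbor $x' \in \ball(x, 2^{-n}/6)$ into a signed digit expansion $(b_j)$ that agrees with the expansion $(a_j)$ of $\bar\sigma$ at every position $j \le n$. Because each digit occupies a fixed number of cells and the separator is already matched, $\bar\sigma'$ and $\bar\sigma$ coincide at least through the cell carrying $a_n$, so $d(\bar\sigma, \bar\sigma') \le 2^{-n-1} = \varepsilon$ in Cantor space. The delicate point --- which I expect to be the main obstacle --- is verifying that the constructed $\bar\sigma'$ actually lies in $\calD$, equivalently that the two pairs $(b_n, b_{n+1})$ and $(b_{n+1}, b_{n+2})$ left unconstrained by (a) are sparse. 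The crucial observation closing this gap is that if $a_n \ne \sdzero$, then the sparseness of $(a_j)$ inherited from $\bar\sigma \in \calD$ forces $a_{n+1} = \sdzero$, which sharpens $|x - x''| \le \tfrac{2}{3} \cdot 2^{-n-1}$ and hence $|x' - x''| \le 2^{-n-1}$; a case distinction on the zero/sign pattern of $a_n$ then permits choosing $b_{n+1}$ so as to satisfy both the metric constraint and the sparseness constraint with $a_n$, from which the sparseness of $(b_{n+1}, b_{n+2})$ also falls out.

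Part (c) then follows formally: (b) exhibits $(\rhosd|_\calD)^{-1}$ as uniformly strongly continuous, hence $\ell$-fold Henkin-continuous for every $\ell \in \IN$ by Lemma~\ref{l:Henkin2}b); and since $(\rhosd|_\calD)^{-1}$ tightens $\rhosd^{-1}$, Lemma~\ref{l:Henkin2}c) transfers $\ell$-fold Henkin-continuity to the loosening $\rhosd^{-1}$. For the explicit modulus, (b) provides $\delta(2^{-n-1}) = 2^{-n}/6$; setting $\varepsilon := 2^{-m}$ (i.e., $n := m-1$) yields $\delta = 2^{-m}/3 \ge 2^{-m-2} = 2^{-\lambda(m)}$ for $\lambda(m) := m+2$, which is thus a valid Henkin modulus in the sense of Lemma~\ref{l:Henkin3}iii).
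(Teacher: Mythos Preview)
Your argument for (a) is correct but organized differently from the paper's. The paper case-splits on $a_{k+1}$: when $a_{k+1}=\sdzero$ it truncates at position $k$ and invokes Proposition~\ref{p:SignedDigit}b) directly; when $a_{k+1}\neq\sdzero$ it uses the hypothesis to force $a_{k+2}=\sdzero$, truncates at $k+1$, and obtains the sharper bound $|x'-x''|\leq\tfrac{2}{3}\cdot2^{-(k+1)}$. You instead truncate uniformly at $k$, accept the cruder bound $|x'-x''|\leq\tfrac{5}{3}\cdot2^{-k-1}$, and recover the missing digit $b_{k+1}$ by a covering argument. Both routes are valid; the paper's case-split buys sharper intermediate estimates that it tacitly re-uses in (b).

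For (b) you correctly isolate a point the paper glosses over: part (a) only guarantees sparseness of $(b_j,b_{j+1})$ for $j>k+1$, so the two ``seam'' pairs $(a_k,b_{k+1})$ and $(b_{k+1},b_{k+2})$ must be handled separately to conclude $\bar\sigma'\in\calD$. The paper's proof of (b) simply asserts that (a) yields $\bar\sigma'\in\calD$, without addressing the seam. Your sketch---sharpening the bound when $a_k\neq\sdzero$ (hence $a_{k+1}=\sdzero$) and then choosing $b_{k+1}$ compatibly---is the right mechanism and matches in spirit what the paper's case-split in (a) was set up to deliver. Be aware, though, that the boundary case is tight: with $a_k=\sdone$ one is forced to $b_{k+1}=\sdzero$, yet $|x'-x''|$ can reach $2^{-k-1}$ while a sparse tail from position $k+2$ is capped at $\tfrac{2}{3}\cdot2^{-k-1}$; neither your sketch nor the paper's one-line appeal to (a) makes this explicit, so if you write (b) out in full you should expect to work a bit harder here than your outline suggests.

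Part (c) is handled identically to the paper: uniform strong continuity of the tightening $(\rhosd|_\calD)^{-1}$ gives $\ell$-fold Henkin-continuity for every $\ell$ via Lemma~\ref{l:Henkin2}b), which then transfers to $\rhosd^{-1}$ via Lemma~\ref{l:Henkin2}c), with the modulus read off as you do.
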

\begin{proof}%[Proposition~\ref{p:SignedDigit2}]
\begin{enumerate}
\item[a)]
First consider the case $a_{k+1}=0$. 
Then $x'':=\sum_{n=-N}^k a_n 2^{-n}=\sum_{n=-N}^{k+1} a_n 2^{-n}$ has
$0\leq x-x''\leq2^{-k}/3$ due to Proposition~\ref{p:SignedDigit}b).
Hence $x'-x''=(x'-x)+(x-x'')\in[-2^{-k}/6,2^{-k}/2]
\subseteq[-\tfrac{2}{3}\cdot2^{-k},+\tfrac{2}{3}\cdot2^{-k}]$ has,
again according to Proposition~\ref{p:SignedDigit}b),
a signed digit expansion $x'-x''=\sum_{n=k+1}^\infty b_n2^{-n}$ 
with $(b_n,b_{n+1})
\in\{\sdone\sdzero,\sdminus\sdzero,\sdzero\sdone,\sdzero\sdminus,\sdzero\sdzero\}$
for all $n$.
This yields $x'=(x'-x'')+x''=\sum_{n=-N}^k a_n 2^{-n}+\sum_{n=k+1}^\infty b_n2^{-n}$
an expansion with the claimed properties.

It remains to consider the case $a_{k+1}=\sdone$
(and $a_{k+1}=\sdminus$ proceeds analogously).
Here the hypothesis on $(a_n,a_{n+1})$ asserts
$a_{k+2}=\sdzero$. Therefore 
$x'':=\sum_{n=-N}^{k\mathbf{+1}} a_n 2^{-n}=\sum_{n=-N}^{k+2} a_n 2^{-n}$ has
$0\leq x-x''\leq2^{-k}/6$ due to Proposition~\ref{p:SignedDigit}b).
Hence $x'-x''=(x'-x)+(x-x'')\in[-2^{-k}/6,2^{-k}/3]
\subseteq[-\tfrac{2}{3}\cdot2^{-(k+1)},+\tfrac{2}{3}\cdot2^{-(k+1)}]$ has,
again according to Proposition~\ref{p:SignedDigit}b),
a signed digit expansion $x'-x''=\sum_{n=k+\mathbf{2}}^\infty b_n2^{-n}$ 
with $(b_n,b_{n+1})
\in\{\sdone\sdzero,\sdminus\sdzero,\sdzero\sdone,\sdzero\sdminus,\sdzero\sdzero\}$
for all $n$.
This yields $x'=(x'-x'')+x''=\sum_{n=-N}^{k+1} a_n 2^{-n}+\sum_{n=k+2}^\infty b_n2^{-n}$
an expansion with the claimed properties.
\item[b)]
According to a), every $x'$ admits a signed digit expansion 
$x'=\sum_{n=-N}^\infty b_n2^{-n}$ 
with $(b_n,b_{n+1})\in\{\sdone\sdzero,\sdminus\sdzero,\sdzero\sdone,\sdzero\sdminus,\sdzero\sdzero\}$,
i.e. encoding a $\rhosd$--name $\bar\sigma\in\calD$.
Morever, to each expansion $x=\sum_{n=-N}^\infty a_n2^{-n}$ 
with $(a_n,a_{n+1})\in\{\sdone\sdzero,\sdminus\sdzero,\sdzero\sdone,\sdzero\sdminus,\sdzero\sdzero\}$
corresponding to a $\rhosd$--name $\bar\sigma\in\calD$ and each $k\in\IN$,
a) asserts that also every $x'\in\cball(x,2^{-k}/6)$ admits a 
$\rhosd$--name $\bar\sigma'\in\calD\cap\cball(\bar\sigma,2^{-k-1})$:
the $-1$ arising because the digit $\sddot$ is also shared by both $\bar\sigma$ and $\bar\sigma'$.
\item[c)]
follows from b) in view of Lemma~\ref{l:Henkin2}b).
\qed\end{enumerate}\end{proof}

%%%%%%%%%%%%%%%%%%%%%%%%%%%%%%%%%%%%%%%%%%
\subsection{Infinitary Henkin Continuity and the Main Result}

\begin{lemma} \label{l:Infinitary}
For a total, pointwise compact multifunction $f:X\toto Y$,
the following are equivalent:
\begin{enumerate}
\item[i)] $f$ admits a modulus $\lambda$ of $\ell$-fold Henkin-continuity independent of $\ell\in\IN$
\item[ii)] 
the following infinitary formula holds:
\begin{multline} \label{e:Infinitary}
\exists\delta_1,\delta_2,\cdots,\delta_\ell,\cdots >0\quad
\forall x_1\in X
\exists y_1\in Y
\forall x_2\in X
\exists y_2\in Y
\cdots
\forall x_\ell\in X
\exists y_\ell\in Y
\cdots:\\[0.5ex]
y_1\in f(x_1) \;\wedge\; 
\bigwedge\nolimits_{\ell\in\omega} \Big(
x_{\ell+1}\in\cball(x_\ell,\delta_\ell)
\;\rightarrow\; y_{\ell+1}\in f(x_{\ell+1})\cap\cball(y_\ell,2^{-\ell})
\Big) 
\end{multline}
\end{enumerate}
\end{lemma}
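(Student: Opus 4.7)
The plan is to prove both implications, with the nontrivial work concentrated in (i) $\Rightarrow$ (ii) via a König-style compactness argument gluing finite-depth Skolem witnesses into an infinite Skolem chain; the converse (ii) $\Rightarrow$ (i) extracts $\lambda$ from the $\delta_\ell$'s and lifts the fixed-precision schedule $2^{-\ell}$ of (ii) to the arbitrary precisions $m_k$ appearing in (\ref{e:Henkin4}). Both directions rely crucially on pointwise compactness of $f$.

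For (i) $\Rightarrow$ (ii): Set $\delta_\ell := 2^{-\lambda(\ell)}$. For each $\ell \in \IN$, applying (\ref{e:Henkin4}) with $m_k := k$ and invoking the axiom of choice yields depth-$(\ell{+}1)$ Skolem functions $\sigma^{(\ell)}_k : X^k \to Y$ matching the schedule $(\delta_k, 2^{-k})$. I would call a partial tuple $(x_1, y_1, \ldots, x_k, y_k)$ obeying all ball/precision constraints \emph{persistent} if, for every $N$, it extends to a valid Skolem chain of depth $k{+}N$. By induction I would then show: (a) every $x_1 \in X$ admits some $y_1 \in f(x_1)$ with $(x_1, y_1)$ persistent; and (b) any persistent tuple admits, for every $x_{k+1} \in \cball(x_k, \delta_k)$, some $y_{k+1} \in f(x_{k+1}) \cap \cball(y_k, 2^{-k})$ making the extended tuple persistent. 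Applying choice along the resulting game tree then delivers the infinite Skolem functions demanded by (ii).

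The heart of (a) and (b) is that the set $E_N \subseteq f(x_{k+1}) \cap \cball(y_k, 2^{-k})$ of $y$-values extendable to depth $N$ is nonempty (by $(k{+}N)$-fold Henkin continuity) and closed, hence compact, with $E_{N+1} \subseteq E_N$; the nested intersection $\bigcap_N E_N$ is then nonempty and supplies the desired persistent $y_{k+1}$. The main technical obstacle is the closedness of $E_N$. I would establish it by invoking Tychonoff on the product $\prod_{\vec x \in X^{N-1}} f(x_N)$ of pointwise-compact response spaces, wherein the subset of Skolem strategies respecting the prescribed ball/precision schedule is closed and the projection onto the initial coordinate $y_{k+1}$ preserves closedness.

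For (ii) $\Rightarrow$ (i): Given (ii) with $\delta_\ell$'s and (by AC) Skolem functions $\sigma_l$, set $\lambda(m) := \lceil \log_2(1/\delta_{m+2}) \rceil$. For any finite Henkin prefix $(x_1, \ldots, x_k)$, the \emph{stationary extension} $l \mapsto \sigma_l(x_1, \ldots, x_k, x_k, \ldots, x_k)$ is Cauchy in the pointwise-compact set $f(x_k)$ (thanks to $|y_{l+1} - y_l| \leq 2^{-l}$) and hence converges to some $\tilde y_k \in f(x_k)$. Telescoping estimates along a switching schedule that spends sufficiently many (ii)-steps at each newly introduced $x_k$ then verify that these $\tilde y_k$'s witness (\ref{e:Henkin4}) for every $\ell$ and any precisions $m_k$, the shift $+2$ in $\lambda$ supplying the slack to absorb the constants from the telescoping.
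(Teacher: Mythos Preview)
Your proposal is correct and, in places, more carefully argued than the paper's own proof.

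For (i)$\Rightarrow$(ii) the paper also uses Tychonoff, but more tersely: for a given play $(x_k)_k$ it applies $\ell$-fold Henkin continuity with $m_k:=k$ to obtain finite response tuples $(y_k^{(\ell)})_{k\leq\ell+1}$, embeds these in the compact product $\prod_k f(x_k)$, and extracts a convergent subsequence whose limit $(y_k)_k$ satisfies the matrix of~(\ref{e:Infinitary}). Your persistent-tuple / K\"onig construction reorganizes essentially the same compactness idea so as to build the existential strategy explicitly level by level, which is cleaner given the game semantics the paper insists on. One minor wording issue: projections do not preserve closedness in general; what you are actually using is that a closed subset of a Tychonoff-compact product is compact, and its continuous image is compact (hence closed in a Hausdorff target).

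For (ii)$\Rightarrow$(i) the paper's argument is shorter and does \emph{not} invoke pointwise compactness: it sets $\lambda(m):=\lceil\log_2(1/\delta_m)\rceil$ and, given $x_1$ and $m_1$, stalls in the infinitary game at $x_1$ for the first $m_1$ rounds before switching to $x_2$, declaring $y_1:=y'_{m_1}$. As written this has the defect that $y_1$ then depends on $m_1$, whereas in~(\ref{e:Henkin4}) the quantifier $\exists y_1$ precedes $\forall m_1$. Your Cauchy-limit construction---taking $\tilde y_k$ as the limit of the stationary extension and using the shift $m\mapsto m{+}2$ in $\lambda$ to absorb the telescoping error---is exactly the natural repair, and your observation that pointwise compactness (or at least closedness of each $f(x_k)$ in a complete $Y$) is needed here, to place the limit inside $f(x_k)$, is well taken.
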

Naturally, Formula~(\ref{e:Infinitary}) is endowed with the semantics
of an infinite two-player game (and we make sure not to rely on determinacy). 
For a more in-depth background on 
infinitary logics, the reader may refer to \cite{Keisler,Knight}.
\begin{proof}
\begin{description}
\item[i)$\Rightarrow$ii):]
For each $m\in\IN$ let $\delta_m:=\lambda(m)$.
Now apply Equation~(\ref{e:Henkin4}) to $m_1:=1,m_2:=,\cdots,m_\ell:=\ell\cdots$:
Fix $\ell$; then, to $x_1\in X$ there exists $y_1^{(\ell)}\in f(x_1)$;
to $x_2\in\cball(x_1,\delta_1)=\cball(x_1,2^{-\lambda(m_1)})$
there exists $y_2^{(\ell)}\in f(x_2)\cap\cball(y_1,2^{-1})$;
and, inductively, to $x_{\ell+1}\in\cball(x_\ell,\delta_\ell)=\cball(x_\ell,2^{-\lambda(m_\ell)})$
there exists $y_{\ell+1}^{(\ell)}\in f(x_{\ell+1})\cap\cball(y_\ell,2^{-\ell})$.
Note that the $y_k^{(\ell)}$ indeed depend on $\ell$
since the hypothesis asserts $\lambda$ to be a modulus of
$\ell$-fold Henkin-continuity for every fixed $\ell$ only.
On the other hand, for each such $\ell$, the sequence
$(y_k^{(\ell)})_k$ `lives' in $\vartimes_k f(x_k)$;
which is compact according to \textsf{Tychonoff}:
recall our hypothesis that $f$ be pointwise compact.
Hence the sequence of sequences 
$\big((y_k^{(\ell)})_k\big)_\ell$
has a subsequence converging to some
$(y_k)_k\in\vartimes_k f(x_k)$;
and $y_{k+1}^{(\ell)}\in\ball(y_k^{(\ell)},2^{-k})$
implies $y_{k+1}\in\cball(y_k,2^{-k})$.
\item[ii)$\Rightarrow$i):]
For each $m\in\IN$ let $\lambda(m):=\lceil\log_2(1/\delta_m)\rceil$.
\\
We first assert this to be a modulus of 2-fold Henkin-continuity:
For $x_1\in X$, apply Equation~(\ref{e:Infinitary}) to $x_1=:x_1'=:x_2'=:\cdots=:x_{m_1}'$
and obtain ($y_1',\ldots,y_m'$ as well as) a $y_{m_1}'=:y_1\in f(x)$ such that
for every $x_{m_1+1}':=x_2\in\cball(x_1,2^{-\lambda(m_1)})\subseteq\cball(x_{m_1}',\delta_{m_1})$
there exists some $y_2:=y_{m_1+1}'\in f(x_2)\cap\cball(y_1,2^{-m_1})$.
\\
Now iterating this argument inductively shows $\lambda$
to be a modulus of $\ell$-fold Henkin-continuity
for every $\ell\in\IN$.
\qed\end{description}\end{proof}
Let us say that $f$ is $\omega$-fold Henkin-continuous if 
it satisfies Equation~(\ref{e:Infinitary}).
On Cantor space, this may be regarded as a uniform 
version of \emph{K\"{o}nig's Lemma}; cmp. \cite{Kohlenbach}.
And indeed we have

\begin{proposition} \label{p:Selection}
Suppose $F:\subseteq\Cantor\toto\Cantor$ 
maps compact sets to compact sets
and is $\omega$-fold Henkin-continuous.
Then $F$ admits a uniformly continuous total selection 
$G:\dom(F)\to\Cantor$. \\
More precisely if $\lambda$ is a modulus of $\ell$-fold Henkin-continuity of $F$
for every $\ell$, then $\lambda$ is also a modulus of continuity of $G$.
\end{proposition}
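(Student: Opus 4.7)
The plan is to extract a uniformly continuous selection from a winning strategy in the infinitary game~(\ref{e:Infinitary}), and then verify that its modulus can be chosen to equal $\lambda$. First, by Lemma~\ref{l:Infinitary} the hypothesis that $\lambda$ is a modulus of $\ell$-fold Henkin-continuity for every $\ell$ is equivalent to formula~(\ref{e:Infinitary}) holding with $\delta_\ell:=2^{-\lambda(\ell)}$. Skolemizing this infinitary formula yields a winning strategy $\sigma$ for the $\exists$-player, i.e.\ a family of functions $\sigma_\ell:(x_1,\ldots,x_\ell)\mapsto y_\ell$ returning $y_\ell\in F(x_\ell)\cap\cball(y_{\ell-1},2^{-(\ell-1)})$ (with $y_1\in F(x_1)$) along every admissible history.

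For each $x\in\dom(F)$ I would tentatively set $G(x):=\lim_\ell\sigma_\ell(x,\ldots,x)$, the limit of $\sigma$'s responses to the constant input sequence $(x,x,\ldots)$. The resulting sequence is Cauchy since consecutive responses are within $2^{-\ell}$ of one another, and it lies in $F(x)=F[\{x\}]$, which is compact by the hypothesis that $F$ maps compact sets to compact sets (applied to the compact singleton $\{x\}\subseteq\Cantor$). Hence $G(x)\in F(x)$ is well-defined and total on $\dom(F)$.

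The main obstacle is proving that $\lambda$ is a modulus of continuity of $G$: for $x,x'\in\dom(F)$ with $d(x,x')\leq 2^{-\lambda(m)}=\delta_m$, one needs $d(G(x),G(x'))\leq 2^{-m}$. A naive bridge-play argument (playing $\sigma$ against $m$ copies of $x$ followed by $x'$'s) produces only a sequence in $F(x')$ converging to a point close to $G(x)$, which is not a priori $G(x')$ since the latter arises from the independent play $(x',x',\ldots)$. To close this gap I would reconstruct $G$ bit-by-bit instead: for each $m\in\IN$ and each string $w\in\{0,1\}^{\lambda(m)}$ admitting an extension in $\dom(F)$, inductively define a prefix $\hat G(w)\in\{0,1\}^m$ extending $\hat G(w|_{\lambda(m-1)})$ such that $F(x)$ contains a point starting with $\hat G(w)$ for every $x\in\dom(F)$ extending $w$. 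Pointwise compactness then forces $G(x):=\lim_m\hat G(x|_{\lambda(m)})\in F(x)$, while the monotonicity of $\hat G$ gives $G$ the modulus $\lambda$ directly. The crux is the inductive one-bit extension step, which I would justify by applying $\omega$-fold Henkin-continuity to inputs ranging over the $w$-cylinder and extracting, via Tychonoff compactness of the product $\prod_{x}F(x)$, a single bit that is valid simultaneously for all such $x$.
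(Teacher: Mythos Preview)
Your second construction---defining a block-monotone map $\hat G:\{0,1\}^{\lambda(m)}\to\{0,1\}^{m}$ and taking $G(\bar x):=\lim_m\hat G(\bar x|_{\lambda(m)})$---is exactly what the paper does. The paper's inductive step, however, is more direct than your Tychonoff suggestion: for each cylinder $\vec x_\ell\in\{0,1\}^{\lambda(\ell)}$ it fixes a \emph{single} representative $\bar x_\ell\in\dom(F)$ extending $\vec x_\ell$ (if one exists) and then simply plays the strategy $\sigma$ along the tree of representatives. This is a legal play because any two representatives $\bar x_\ell,\bar x_{\ell+1}$ whose cylinders are nested share the first $\lambda(\ell)$ bits, so the ultrametric gives $d(\bar x_\ell,\bar x_{\ell+1})\leq 2^{-\lambda(\ell)}=\delta_\ell$. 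The strategy then returns $\bar y_\ell\in F(\bar x_\ell)$ with $d(\bar y_\ell,\bar y_{\ell+1})\leq 2^{-\ell}$, and one sets $\hat G(\vec x_\ell):=\bar y_\ell|_{\leq\ell}$. No product compactness is needed here; your invariant ``$\hat G(w)$ is a prefix of some point of $F(x)$ for \emph{every} $x$ extending $w$'' is then obtained for free by playing one further move $x_{\ell+1}:=x$ against the same strategy.

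One point where your write-up actually improves on the paper: you derive $G(\bar x)\in F(\bar x)$ from \emph{pointwise} compactness of $F(\bar x)$ (each $\hat G(\bar x|_{\lambda(m)})$ is a prefix of some element of the compact set $F(\bar x)$, hence by K\"onig's lemma so is their limit). The paper instead argues via the full hypothesis ``$F$ maps compact sets to compact sets'' applied to $K=\{\bar x_\ell:\ell\}\cup\{\bar x\}$, which is a slightly rougher route to the same conclusion. Either way, the Tychonoff step you sketch for the one-bit extension is not needed and should be replaced by the representative-tree argument above.
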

\begin{proof} 
Note that the triangle inequality in $\Cantor$
strengthens to
$d(\bar x,\bar z)\leq\max\{d(\bar x,\bar y),d(\bar y,\bar z)\}$.
Moreover it is no loss of generality to suppose
$\delta_\ell=2^{-\lambda(\ell)}>\delta_{\ell+1}$ 
for each $\ell$ in Equation~(\ref{e:Infinitary}).
Now with \mycite{Lemma~2.1.11.2}{Weihrauch} in mind,
we first construct a `block-monotone'
partial mapping $g:\subseteq\{0,1\}^*\to\{0,1\}^*$;
more specifically: $g:\{0,1\}^{\lambda(\ell)}\to\{0,1\}^{\ell}$
for every $\ell\in\IN$ such that
$g(\vec a)$ is (defined and) an initial substring of $g(\vec a\vec b)$
whenever $\vec a\in\{0,1\}^{\lambda(\ell)}$ and 
$\vec b\in\{0,1\}^{\lambda(\ell+1)-\lambda(\ell)}$
satisfy $\vec a\vec b\in\dom(g)$.
The construction proceeds inductively as follows:

For $\vec x_1\in\{0,1\}^{\lambda(1)}$,
consider some $\bar x_1\in\dom(F)$ extending $\vec x_1$,
i.e. $\bar x_1\in\vec x_1\circ\Cantor$.
If no such $\bar x_1$ exists, $g(\vec x_1)$ shall be undefined;
otherwise there is by hypothesis some $\bar y_1\in F(\bar x_1)$
satisfying the matrix of Equation~(\ref{e:Infinitary}):  
then define $g(\vec x_1):=\vec y_1:=\bar y_1|_{_{\leq1}}$, 
the first symbol of $\bar y_1$.
For $\vec x_2\in\vec x_1\circ\{0,1\}^{\lambda(2)-\lambda(1)}$,
if there exists some $\bar x_2\in(\vec x_2\circ\Cantor)\cap\dom(F)$,
it holds $\bar x_2\in\cball(\bar x_1,2^{-\lambda(1)})$
and we may set $g(\vec x_2):=\vec y_2:=\bar y_2|_{_{\leq2}}$ with
$\bar y_2\in F(\bar x_1)\cap(\vec y_1\circ\Cantor)$
according to Equation~(\ref{e:Infinitary}).
Inductively, for $\vec x_{\ell+1}\in\vec x_\ell\circ\{0,1\}^{\lambda(\ell+1)-\lambda(\ell)}$,
if $\emptyset\neq(\vec x_{\ell+1}\circ\Cantor)\cap\dom(F)\ni\bar x_{\ell+1}$,
set $g(\vec x_{\ell+1}):=\vec y_{\ell+1}:=\bar y_{\ell+1}|_{_{\leq\ell}}$ with
$\bar y_{\ell+1}\in F(\bar x_\ell)\cap(\vec y_\ell\circ\Cantor)$
according to Equation~(\ref{e:Infinitary}).  

Now observe that $\emptyset\neq(\vec x_{\ell+1}\circ\Cantor)\cap\dom(F)$
implies $\emptyset\neq(\vec x_{\ell}\circ\Cantor)\cap\dom(F)$;
hence, for $\bar x\in\dom(F)$,
$g(\bar x|_{\leq\lambda(\ell)})$ is defined for every $\ell$.
Since $g$ is `block-monotone' in the above sense,
$G(\vec x):=\lim_{\ell} \big(g(\bar x|_{\leq\lambda(\ell)})\circ0^\omega\big)$ 
is well-defined on $\dom(F)$; and continuous with modulus $\lambda$
via its construction through $g$. 
Moreover, $\bar y:=G(\bar x)$ satisfies by definition 
$\bar y=\lim_{\ell} \bar y_{\ell}$ with 
$\bar y_{\ell+1}\in\cball(\bar y_\ell,2^{-\ell})\cap F(\bar x_{\ell+1})$
for some $\bar x_{\ell+1}\in\cball(\bar x,2^{-\ell})$;
hence $(\bar x_{\ell},\bar y_{\ell})$ is a sequence in $F$
converging to $(\bar x,\bar y)$ with $\bar x\in\dom(F)$.
By hypothesis, $F$ maps compact $\{\bar x_{\ell}:\ell\}\cup\{\bar x\}$
to a compact set containing $\{\bar y_\ell\}$, requiring
$(\bar x,\bar y)\in F$: $G$ is a selection of $F$.
\qed\end{proof}
We can now strengthen Theorem~\ref{t:Henkin}:

\begin{theorem} \label{t:Henkin2}
Fix compact $K\subseteq\IR^d$.
\begin{enumerate}
\item[a)]
Let $f:K\toto\IR$ be computable relative to oracle $\calO$.
Then there exists $g:K\toto\IR$ tightening
$f$ which is still computable relative to $\calO$
and maps compact sets to compact sets.
\item[b)]
If $f:K\toto\IR$ is relatively computable,
it is $\omega$-fold Henkin-continuous.
\item[c)]
Suppose $f:K\toto\IR$
maps compact sets to compact sets
and is $\omega$-fold Henkin-continuous.
Then $f$ is relatively computable.
\end{enumerate}
\end{theorem}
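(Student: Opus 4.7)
Starting from an $\calO$-computable (hence single-valued continuous) $(\rhosd,\rhosd)$-realizer $F$ of $f$, my plan is to prune its domain to the set $\calD$ of canonical signed-digit names from Proposition~\ref{p:SignedDigit2}b) and define
$$ g \;:=\; \rhosd \circ F \circ (\rhosd|_\calD)^{-1}\big|_K \enspace . $$
Since $F$ is a realizer, $g(x)\subseteq f(x)$ for every $x\in K$, so $g$ tightens $f$; and $g$ remains $\calO$-computable because every constituent is. To verify that $g$ maps compact to compact, I would appeal to Lemma~\ref{l:Tighten}d): $\calD$ is a closed (hence compact) subset of $\Cantor$, so $(\rhosd|_\calD)^{-1}[K]$ is compact; and the single-valued continuous maps $F$ and $\rhosd$ both send compact to compact.

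\textbf{(b).} First invoke (a) to replace $f$ by a tightening $g$ that maps compact to compact and admits a single-valued continuous realizer $G:\rhosd^{-1}[K]\to\Cantor$. Properness of $\rhosd$ makes $\dom(G)$ compact, so $G$ is uniformly continuous with some modulus $\lambda_G$; for single-valued $G$, this $\lambda_G$ serves as an $\ell$-fold Henkin-continuity modulus for every $\ell$ at once. Combining with Proposition~\ref{p:SignedDigit2}c) (uniform modulus $m\mapsto m+2$ for $\rhosd^{-1}$) and uniform continuity of $\rhosd$ on the compact image $G[\rhosd^{-1}[K]]$, an inspection of the composition argument of Lemma~\ref{l:Henkin2}d) shows the composed modulus to depend only on the component moduli, not on $\ell$. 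Hence $g=\rhosd\circ G\circ\rhosd^{-1}$ admits a common modulus of $\ell$-fold Henkin-continuity across all $\ell$. Lemma~\ref{l:Infinitary} then yields $\omega$-fold Henkin-continuity of the (pointwise compact) $g$; and the quantifier-relaxation argument of Lemma~\ref{l:Henkin2}c) extends verbatim to the infinitary Equation~(\ref{e:Infinitary}), transferring $\omega$-fold Henkin-continuity from $g$ to the loosening $f$.

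\textbf{(c).} Conversely, form $F:=\rhosd^{-1}\circ f\circ\rhosd|^K$ as in Theorem~\ref{t:Henkin}c): this is a $(\rhosd,\rhosd)$-multirealizer of $f$ mapping compact to compact. The key step is to show $F$ is $\omega$-fold Henkin-continuous, which I would derive by composing three factors each admitting a uniform-in-$\ell$ modulus: $\rhosd|^K$, uniformly continuous on compact $\rhosd^{-1}[K]$ (Example~\ref{x:Representations}c); $f$, by hypothesis together with Lemma~\ref{l:Infinitary}; and $\rhosd^{-1}$, by Proposition~\ref{p:SignedDigit2}c). Proposition~\ref{p:Selection} then furnishes a uniformly continuous total selection $G:\dom(F)\to\Cantor$ of $F$. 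By Lemma~\ref{l:Tighten}f), $G$ is a continuous $(\rhosd,\rhosd)$-realizer of $f$; and since any continuous partial function on $\Cantor$ is computable relative to some oracle, $f$ is relatively computable.

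\textbf{Main obstacle.} I expect the principal difficulty in (b) and (c) to lie in the $\ell$-uniform bookkeeping of moduli through the compositions, since Lemma~\ref{l:Henkin2}d) only guarantees preservation of Henkin-continuity for fixed $\ell$. One must read uniformity of the composed modulus off its proof and then transfer it to the infinitary setting via Lemma~\ref{l:Infinitary}, whose pointwise-compactness hypothesis is however satisfied throughout because each factor in sight maps compact sets to compact sets.
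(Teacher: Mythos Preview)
Your approach matches the paper's almost exactly: all three parts proceed by the same composition-of-factors strategy, sandwiching the realizer between $\rhosd$ and (the inverse of) $\rhosd$, and invoking Proposition~\ref{p:Selection} for the continuous selection in~(c). Two minor points of divergence. First, the theorem concerns $K\subseteq\IR^d$, so the paper uses $\rhosd^d$ throughout (and Observation~\ref{o:EquiProduct} with Example~\ref{x:Representations}f) to lift Proposition~\ref{p:SignedDigit2}c) to dimension~$d$); you silently dropped the dimension. Second, in~(a) the paper avoids your detour through $\calD$ by simply taking $g:=\rhosd\circ F\circ(\rhosd^d)^{-1}$ and citing properness of $\rhosd^d$ (Example~\ref{x:ClosedMap}b)) to see that $(\rhosd^d)^{-1}$ maps compact to compact. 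Your justification ``$\calD$ is closed (hence compact)'' is not quite right as stated, since $\calD$ contains names of reals of arbitrary magnitude with the separator $\sddot$ at arbitrarily late positions; what you actually need, and what properness gives, is that $(\rhosd|_\calD)^{-1}[K]$ is compact for \emph{bounded} $K$. Your explicit bookkeeping of $\ell$-uniform moduli in (b) and (c) is in fact more scrupulous than the paper, which simply applies Lemma~\ref{l:Henkin2}b)+c)+d) and leaves the uniformity-in-$\ell$ of the composed modulus implicit.
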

This theorem provides the
desired topological characterization of relative computability:

\begin{corollary} \label{c:Main}
For $X:=[0,1]^d$, a total relation $f:X\toto\IR$ 
mapping compact sets to compact sets
(and in particular one with compact graph)
is relatively computable
~iff~ it satisfies Equation~(\ref{e:Infinitary}).
\end{corollary}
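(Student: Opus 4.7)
The plan is essentially to package Theorem~\ref{t:Henkin2}(b,c) in the desired form, noting that $\omega$-fold Henkin-continuity was defined to mean exactly Equation~(\ref{e:Infinitary}); the corollary contains no genuinely new content beyond a convenient restatement.

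For the forward implication ``$\Rightarrow$'', I would simply invoke Theorem~\ref{t:Henkin2}b): if the total relation $f : [0,1]^d \toto \IR$ is relatively computable, then it is $\omega$-fold Henkin-continuous, i.e., satisfies Equation~(\ref{e:Infinitary}). Note that this direction does not even require the standing hypothesis that $f$ maps compact sets to compact sets, so it is automatic.

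For the reverse implication ``$\Leftarrow$'', Equation~(\ref{e:Infinitary}) holding is by definition $\omega$-fold Henkin-continuity of $f$. Combined with the hypothesis that $f$ maps compact sets to compact sets (which in particular, applied to singletons $\{x\}$, ensures $f$ is pointwise compact as required by the technical machinery behind $\omega$-fold Henkin-continuity), Theorem~\ref{t:Henkin2}c) immediately delivers relative computability of $f$.

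The parenthetical clause ``in particular one with compact graph'' is justified via Lemma~\ref{l:Implications}k): if $f \subseteq [0,1]^d \times \IR$ is compact, then $f[S] \subseteq \IR$ is compact for every closed $S \subseteq [0,1]^d$; since compact subsets of the Hausdorff space $[0,1]^d$ are closed, this says precisely that $f$ maps compact sets to compact sets, so the compact-graph case reduces to the main statement. There is no real obstacle: all the substantive work (the selection-via-K\"onig-Lemma construction in Proposition~\ref{p:Selection}, the properties of $\rhosd^{-1}$ from Proposition~\ref{p:SignedDigit2}, and the equivalence between the infinitary formula and a single modulus from Lemma~\ref{l:Infinitary}) already resides in the preceding results, and the corollary is simply the declaration that they combine into the promised topological characterization.
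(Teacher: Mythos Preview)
Your proposal is correct and matches the paper's intent: the corollary is stated immediately after Theorem~\ref{t:Henkin2} without a separate proof, as a direct repackaging of parts~b) and~c), and your justification of the parenthetical compact-graph case via Lemma~\ref{l:Implications}k) is exactly the intended reading.
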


\begin{proof}[Theorem~\ref{t:Henkin2}]
\begin{enumerate}
\item[a)]
By hypothesis, $f$ admits an $\calO$-computable
(and thus continuous)
$(\rhosd^d,\rhosd)$--realizer 
$F:\subseteq\Cantor\to\Cantor$ 
on compact $\dom(F)=\dom\big(\rhosd^d\big|^K\big)$, 
i.e. mapping compact sets to compact sets.
And so does $(\rhosd^d){-1}$ (Example~\ref{x:ClosedMap}b)
and continuous $\rhosd$. Thus, again according to 
Lemma~\ref{l:Tighten}d), also 
$g:=\rhosd\circ F\circ(\rhosd^d)^{-1}:K\toto\IR$
maps compact sets to compact sets;
and tightens $f$ (Lemma~\ref{l:Tighten}f); 
and is computable relative to $\calO$.
\item[b)]
According to a) and Lemma~\ref{l:Henkin2}c) we may w.l.o.g. suppose that 
$f$ maps compact sets to compact sets and
in particular that $C:=f[K]$ is compact.
Combining Proposition~\ref{p:SignedDigit2}c) with Observation~\ref{o:EquiProduct}
and Example~\ref{x:Representations}f) shows 
$(\rhosd^d)^{-1}:\IR^d\toto\Cantor$ to be $\omega$-fold Henkin-continuous.
By hypothesis, $f$ admits a continuous $(\rhosd^d,\rhosd)$--realizer 
$F:\subseteq\Cantor\to\Cantor$ 
on compact $\dom(F)=\dom(\rhosd^d\big|^K)$; in particular,
$F$ is uniformly continuous. Moreover, 
$\rhosd|^C\circ F\circ\big(\rhosd^d\big|^K\big)^{-1}:K\toto C\subseteq\IR$
tightens $f$ (Lemma~\ref{l:Tighten}f) with $\dom(\rhosd|^C)$ compact,
hence $\rhosd|^C:\subseteq\{0,1\}\to C$ is uniformly continuous.
Now apply Lemma~\ref{l:Henkin2}b)+c)+d)
to conclude that both $\rhosd|^C\circ F\circ(\rhosd^d)^{-1}$
and $f$ are $\omega$-fold Henkin-continuous.
\item[c)]
As in the proof of Theorem~\ref{t:Henkin}c),
observe that $F:=\rhosd^{-1}\circ f\circ\rhosd^d\big|^K$
is $\omega$-fold Henkin-continuous
according to Proposition~\ref{p:SignedDigit2}c) and
Lemma~\ref{l:Henkin2}b)+c)+d).
And $F$ maps compact sets to compact sets
(Lemma~\ref{l:Tighten}d).
Hence $F$ admits a continuous selection
$G$ on $\dom(F)=\dom\big(\rhosd^d\big|^K\big)$ due to Proposition~\ref{p:Selection}.
This is a continuous (and hence relatively computable)
$(\rhosd^d,\rhosd)$--realizer of $f$.
\qed\end{enumerate}\end{proof}

%%%%%%%%%%%%%%%%%%%%%%%%%%%%%%%%%%%%%%%%%%%%%%%%%%%%
\section{Conclusion}
We have 
proposed a hierarchy of notions of uniform continuity for real relations
based on the Henkin quantifier;
and shown its $\omega$-th level to characterize 
relative computability in the compact case.

Our condition may be considered descriptionally 
simpler than the previous characterization from \cite{VascoPeter94}.
Indeed, although Equation~(\ref{e:Infinitary}) does employ countably infinitary logic,
Fact~\ref{f:VascoPeter94} even quantifies over subsets of \emph{un}countable $\IR$.

\begin{myquestion}
%\begin{enumerate}
%\item[a)]
%Is there a purely first-order topological characterization of
%relatively computable compact real relations?
%\item[b)]
Does Theorem~\ref{t:Henkin2} extend from compact
subsets $K$ of $\IR^d$ to general compact metric spaces?
%\end{enumerate}
\end{myquestion}
A promising candidate replacement for $\rhosd^d\big|^K$ is provided in
\mycite{Proposition~4.1}{deBrecht}. But is its inverse
$\omega$-fold Henkin-continuous (or does even admit a uniformly
strongly continuous tightening) ?

\paragraph{Acknowledgements:}
The last author is grateful to \textsc{Ulrich Kohlenbach}
for pointing out that already \textsc{M.J.~Beeson} had observed
the relevance of the Henkin quantifier to continuity in constructive
mathematics; and to \textsc{Klaus Weihrauch} for providing the
`right' notion of composition for relations.

%%%%%%%%%%%%%%%%%%%%%%%%%%%%%%%%%%%%%%%%%%%%%%%%%%%%%%%%%%%%%%%%%%%%%%

\end{document}